\newcommand{\supp}{{\rm supp}}
\newcommand{\hphi}{\widehat{\phi}}  %phi^
\newcommand\be{\begin{equation}}
\newcommand\ee{\end{equation}}
\newcommand\bea{\begin{eqnarray}}
\newcommand\eea{\end{eqnarray}}
\newcommand\bi{\begin{itemize}}
\newcommand\ei{\end{itemize}}
\newcommand\ben{\begin{enumerate}}
\newcommand\een{\end{enumerate}}
\newcommand\bc{\begin{center}}
\newcommand\ec{\end{center}}
\newcommand\ba{\begin{array}}
\newcommand\ea{\end{array}}
\newcommand{\R}{\ensuremath{\mathbb{R}}}
\newcommand{\foh}{\frac{1}{2}}  %onehalf
\newtheorem{thm}{Theorem}[section]
\newtheorem{conj}[thm]{Conjecture}
\newtheorem{lem}[thm]{Lemma}
\theoremstyle{definition}
\newtheorem{rek}[thm]{Remark}
\newcommand{\twocase}[5]{#1 \begin{cases} #2 & \text{{\rm #3}}\\ #4
&\text{{\rm #5}} \end{cases}   }
\newcommand{\threecase}[7]{#1 \begin{cases} #2 & \text{{\rm #3}}\\ #4
&\text{{\rm #5}}\\ #6 & \text{{\rm #7}} \end{cases}   }
\newcommand{\gep}{\epsilon}
\newcommand{\gam}[2]{\Gamma\left(\frac{#1}{#2}\right)}
\numberwithin{equation}{section}
\begin{document}

\title{A Unitary Test of the Ratios Conjecture}

\author[Goes]{John Goes}\email{johnwgoes@gmail.com}
\address{Department of Mathematics, University of Illinois at Chicago, Chicago, IL 60607}

\author[Jackson]{Steven Jackson}\email{Steven.R.Jackson@williams.edu}
\address{Department of Mathematics and Statistics, Williams College, Williamstown, MA 01267}

\author[Miller]{Steven J. Miller}\email{Steven.J.Miller@williams.edu}
\address{Department of Mathematics and Statistics, Williams College, Williamstown, MA 01267}

\author[Montague]{David Montague}\email{davmont@umich.edu}
\address{Department of Mathematics, University of Michigan, Ann Arbor, MI 48109}

\author[Ninsuwan]{Kesinee Ninsuwan}\email{Kesinee\underline{\ }Ninsuwan@brown.edu}
\address{Department of Mathematics, Brown University, Providence, RI 02912}

\author[Peckner]{Ryan Peckner}\email{rpeckner@berkeley.edu}
\address{Department of Mathematics, University of California, Berkeley, CA 94720}

\author[Pham]{Thuy Pham}\email{tvp1@williams.edu}
\address{Department of Mathematics and Statistics, Williams College, Williamstown, MA 01267}

\subjclass[2010]{11M26 (primary), 11M41, 15B52 (secondary).}
\keywords{$1$-Level Density, Dirichlet $L$-functions, Low Lying
Zeros, Ratios Conjecture, Dirichlet Characters}

\date{\today}

\thanks{This work was done at the 2009 SMALL Undergraduate Research Project at Williams College, funded by NSF Grant DMS0850577 and Williams College; it is a pleasure to thank them and the other participants. We are grateful to Daniel Fiorilli for comments on an earlier draft, and the referee for many valuable suggestions and observations. The third named author was also partly supported by NSF Grant DMS0600848.}

%We thank Nina Snaith for helpful discussions on the Ratios Conjecture.

\begin{abstract} The Ratios Conjecture of Conrey, Farmer and Zirnbauer \cite{CFZ1,CFZ2} predicts the answers to numerous questions in number theory, ranging from $n$-level densities and correlations to mollifiers to moments and vanishing at the central point. The conjecture gives a recipe to generate these answers, which are believed to be correct up to square-root cancelation. These predictions have been verified, for suitably restricted test functions, for the 1-level density of orthogonal \cite{HuyMil,Mil5,MilMo} and symplectic \cite{Mil3,St} families of $L$-functions. In this paper we verify the conjecture's predictions for the unitary family of all Dirichlet $L$-functions with prime conductor; we show square-root agreement between prediction and number theory if the support of the Fourier transform of the test function is in $(-1,1)$, and for support up to $(-2,2)$ we show agreement up to a power savings in the family's cardinality.
\end{abstract}

\maketitle

\tableofcontents

%%%%%%%%%%%%%%%%%%%%%%%%%%%%%%%%%%%%%%%%%%%%%%%%%%%%%%%%%%%%%%%%%%%%%%%%%%%%%%%%%%%%%%%%%%%%%%%%
%%%%%%%%%%%%%%%%%%%%%%%%%%%%%%%%%%%%%%%%%%%%%%%%%%%%%%%%%%%%%%%%%%%%%%%%%%%%%%%%%%%%%%%%%%%%%%%%
%%%%%%%%%%%%%%%%%%%%%%%%%%%%%%%%%%%%%%%%%%%%%%%%%%%%%%%%%%%%%%%%%%%%%%%%%%%%%%%%%%%%%%%%%%%%%%%%

\section{Introduction}

As the solutions to many problems in number theory are governed by properties of $L$-functions, it is thus important to understand these objects. There are numerous examples of these connections, such as the relationship between the zeros of $\zeta(s)$ and the error term in the Prime Number Theorem (see for example \cite{Da,IK}), the Birch and Swinnerton-Dyer conjecture (which asserts that the rank of the Mordell-Weil group of rational solutions of an elliptic curve $E$ equals the order of vanishing of the associated $L$-function $L(s,E)$ at $s=1/2$; see for instance \cite{IK}), and the order of vanishing of $L$-functions at the central point or the number of normalized zeros of $L$-functions less than half the average spacing apart and the growth of the class number \cite{CI,Go,GZ}, to name just a few.

Since the 1970s, the zeros and values of $L$-functions have been successfully modeled by random matrix theory, which says that zeros behave like eigenvalues of random matrix ensembles, and values behave like the values of the corresponding characteristic polynomials. The correspondence was first seen in the work of Montgomery \cite{Mon2}. Early numerical support was provided by Odlyzko's investigations of the spacings between zeros of $L$-functions and eigenvalues of complex Hermitian matrices \cite{Od1,Od2}. Initially the most useful model for number theory was Dyson's circular unitary ensemble (CUE), a modification of the Gaussian unitary ensemble (GUE); later investigations of zeros near the central point showed that the scaling limits of the various classical compact groups (orthogonal, symplectic and unitary $N \times N$ matrices) are needed. For some of the history and summary of results, see \cite{Con,FM,KaSa2,KeSn3,Meh,MT-B,RS}. These models have led researchers to the correct answers to many problems, and, in fact, have suggested good questions to ask. While we have some understanding of why random matrix theory leads to the correct answer in function fields, for number fields it is just an observed result that these predictions are useful in guessing the correct behavior.

We cannot stress enough how important it is to have a conjectured answer when studying a difficult problem. Random matrix theory has been a powerful tool in providing conjectures to guide researchers; however, it does have some drawbacks. One of the most severe problems is that random matrix theory fails to incorporate the arithmetic of the problem, which has to be incorporated somehow in order to obtain a correct, complete prediction. This omission was keenly felt in Keating and Snaith's \cite{KeSn1,KeSn2} investigations of the moments of $L$-functions, where the main terms of number theory and random matrix theory differ by arithmetical factors which must be incorporated in a somewhat ad hoc manner into the random matrix predictions.

One approach to such difficulties is the hybrid model of Gonek, Hughes and Keating \cite{GHK}, which is a useful tool for thinking about the interaction between random matrix theory and number theory. They replace an $L$-function with a product of two terms, the first being a truncated Euler product over primes (which has the arithmetic) and the second being a truncated Hadamard product over zeros of the $L$-function (which is modeled by random matrix theory). This model has enjoyed some success; in some cases its predictions can be proved correct, and in other cases its predictions agree with standard conjectures (see \cite{GHK,FGH}).

In this paper we explore another method, the $L$-functions Ratios Conjecture of Conrey, Farmer and Zirnbauer \cite{CFZ1,CFZ2}, which is an extension of the algorithm of Conrey, Farmer, Keating, Rubinstein and Snaith \cite{CFKRS} for computing explicit expressions for the main and lower terms for the moments of $L$-functions.

Frequently a problem in number theory can be reduced to a problem about a family of $L$-functions. The first such instance is Dirichlet's theorem for primes in arithmetic progression, where to count $\pi_{q,a}(x)$ (the number of primes at most $x$ congruent to $a$ modulo $q$) we must understand the properties of $L(s,\chi)$ for all characters $\chi$ modulo $q$. They develop a recipe for conjecturing the value of the quotient of products of $L$-functions averaged over a family, such as
\be \sum_{f \in \mathcal{F}} \frac{L(s+\alpha_1,f) \cdots L(s+\alpha_K,f) L(s+\beta_{1},\overline{f}) \cdots L(s+\beta_{L},\overline{f})}{L(s+\gamma_1,f) \cdots L(s+\gamma_Q,f) L(s+\delta_{1},\overline{f}) \cdots L(s+\delta_{R},\overline{f})} \ee  (we describe their recipe in detail in \S\ref{sec:ratiosrecipe}). Numerous quantities in number theory can be deduced from good estimates of sums of this form; examples include spacings between zeros, $n$-level correlations and densities, and moments of $L$-functions to name just a few. The Ratios Conjecture's answer is expected to be accurate to an error of the order of the square-root of the family's cardinality. This is an incredibly detailed and specific conjecture; to appreciate the power of its predictions, it is worth noting that the standard random matrix theory models cannot predict lower order terms of size $1/\log |\mathcal{F}|$, while the Ratios Conjecture is predicting all the terms down to $O(|\mathcal{F}|^{-1/2+\gep})$.

In this paper we test the predictions of the Ratios Conjecture for the 1-level density of the family of Dirichlet characters of prime conductor $q\to\infty$. The $1$-level density for a family $\mathcal{F}$ of $L$-functions is
\begin{eqnarray}
D_{1,\mathcal{F}}(\phi)\ :=\ \frac{1}{|\mathcal{F}|} \sum_{f\in
\mathcal{F}} \sum_{\ell} \phi\left(\gamma_{f,\ell}\frac{\log
Q_f}{2\pi}\right),
\end{eqnarray} where $\phi$ is an even Schwartz test function
whose Fourier transform has compact support, $\foh +
i\gamma_{f,\ell}$ runs through the non-trivial zeros of $L(s,f)$ (if GRH holds, then each $\gamma_{f,\ell} \in \R$), and $Q_f$ is the analytic conductor of $f$; we see in \S\ref{sec:diffandcontourint} that the $1$-level density equals a contour integral of the derivative of a sum over our family of ratios of $L$-functions. As $\phi$ is an even Schwartz function, most of the contribution to $D_{1,\mathcal{F}}(\phi)$ arises from the zeros  near the central point; thus this statistic is well-suited to investigating the
low-lying zeros.

The 1-level density has enjoyed much popularity recently. The reason is twofold. First, of course, there are many problems where the behavior near the central point is of great interest (such as the Birch and Swinnerton-Dyer Conjecture), and thus we want a statistic relevant for such investigations. The second is that for any automorphic cuspidal $L$-function, the $n$-level correlation of the zeros high up on the critical line (and thus the spacing between adjacent normalized zeros) is conjectured to agree with the Gaussian unitary ensemble from random matrix theory (see \cite{Hej,Mon2,RS} for results for suitably restricted test functions), as well as the classical compact groups \cite{KaSa1,KaSa2}. This leads to the question of what is the correct random matrix model for the zeros of an $L$-function, as different ensembles give the same answer. This universality of behavior is broken if instead of studying zeros high up on the critical line for a given $L$-function we instead study zeros near the central point. Averaging over a family of $L$-functions (whose behaviors are expected to be similar near the central point), the universality is broken, and Katz and Sarnak conjecture that families of $L$-functions correspond to classical compact groups (and the classical compact groups (unitary, symplectic and orthogonal) have different behavior). Specifically, for an infinite family of $L$-functions let $\mathcal{F}_N$ be the subset whose conductors equal $N$ (or are at most $N$). They conjecture that \be \lim_{N\to\infty} D_{1,\mathcal{F}_N}(\phi) \to \int \phi(x) W_{G(\mathcal{F})}(x)dx,\ee where $G(\mathcal{F})$ indicates unitary, symplectic or orthogonal (${\rm SO(even)}$ or ${\rm SO(odd)}$) symmetry. We record the different densities for each family. As \be\int f(x) W_{G(\mathcal{F})}(x)dx\ =\ \int \widehat{f}(u) \widehat{W_{G(\mathcal{F})}}(u)du,\ee it suffices to state the Fourier Transforms. Letting  $\eta(u)$ be $1$ ($1/2$ and $0$) for $|u|$ less than $1$ (equal to $1$ and greater than $1$), and $\delta_0$ the standard Dirac Delta functional, the following are the Fourier transforms of the densities of the scaling limits of the various classical compact groups: \bi \item ${\rm SO(even)}$: $\delta_0(u) + \foh \eta(u)$; \item orthogonal:
$\delta_0(u) + \foh$; \item ${\rm SO(odd)}$: $\delta_0(u) - \foh
\eta(u) + 1$; \item symplectic: $\delta_0(u) -
\foh \eta(u)$; \item unitary: $\delta_0(u)$. \ei Note that the first three densities agree for
$|u| < 1$ and split (ie, become distinguishable) for $|u| \geq 1$, and for any support we can distinguish unitary, symplectic and orthogonal symmetry.

There are now many examples where the main term in 1-level density calculations in number theory agrees with the Katz-Sarnak conjectures (at least for suitably restricted test functions), such as all Dirichlet characters, quadratic Dirichlet characters, $L(s,\psi)$ with $\psi$ a character of the ideal class group of the imaginary quadratic field $\mathbb{Q}(\sqrt{-D})$ (as well as other number fields), families of elliptic curves, weight $k$ level $N$ cuspidal newforms, symmetric powers of ${\rm GL}(2)$ $L$-functions, and certain families of ${\rm GL}(4)$ and ${\rm
GL}(6)$ $L$-functions (see \cite{DM1,DM2,FI,Gu,HR,HuMil,ILS,KaSa2,Mil1,MilPe,OS2,RR,Ro,Rub1,Yo2}).

Now that the main terms have been shown to agree, it is natural to look at the lower order terms (see \cite{FI,HKS,Mil2,Mil4,MilPe,Yo1} for some examples). We give two applications of these terms. Initially the zeros of $L$-functions high on the critical line were modeled by the $N\to\infty$ scaling limits of $N\times N$ complex Hermitian matrices. Keating and Snaith \cite{KeSn1,KeSn2} showed that a better model for zeros at height $T$ is given by $N\times N$ matrices with $N \sim \log(T/2\pi)$ (this choice makes the mean spacing between zeros and eigenvalues equal), and further improvement occurs when we incorporate the lower order terms. Our second example involves bounds on the average rank of one-parameter families of elliptic curves; incorporating the lower order terms (which can have a significant contribution when the conductors are small) leads to slightly higher bounds on the average rank, which are more in line with the observed excess rank (see \cite{Mil2}).

%Even better agreement (see \cite{BBLM}) has been found by replacing $N$ with $N_{\rm effective}$, where the first order correction terms are used to slightly adjust the size of the matrix (as $N\to\infty$, $N_{\rm effective}/N \to 1$).

While the main terms in the $1$-level densities studied to date are independent of the arithmetic of the family, this is not the case for the lower order terms. For example, in \cite{Mil4} differences are seen depending on whether or not the family of elliptic curves has complex multiplication, or what its torsion group is, and so on. Additionally, while the main term of  the one-level density of $L$-functions attached to number fields is independent of properties of the number field, these features can surface in the first lower order term \cite{FI,MilPe}. While random matrix theory is unable to make any predictions about these lower order terms, the Ratios Conjecture gives very detailed statements. These have been verified as accurate (up to square-root agreement as predicted!) for suitably restricted test functions for orthogonal families of cusp forms \cite{Mil5,MilMo} and the symplectic families of Dirichlet characters \cite{Mil3,St}. Further, these lower order terms in families of quadratic twists of a fixed elliptic curve \cite{HKS,HuyMil} have been used as inputs in other problems, such as modeling the first zero above the central point for certain families of elliptic curves \cite{DHKMS}.

The purpose of this paper is to test these predictions for the unitary family of Dirichlet characters. We review some needed properties of these $L$-functions in \S\ref{sec:reviewpropLfnsDChars} and then state our results in \S\ref{sec:statementresults}.

\subsection{Review of Dirichlet $L$-functions}\label{sec:reviewpropLfnsDChars}

We quickly review some needed facts about Dirichlet characters and $L$-functions; see
\cite{Da,IK} for details. Let $\chi$ be a non-principal
Dirichlet character of prime modulus $q$. Let $\tau(\chi)$ be the Gauss
sum \be \tau(\chi) \ := \ \sum_{k=1}^{q-1} \chi(k) e(k/q),
\ee which is of modulus $\sqrt{q}$; as always, throughout the paper we use \be e(z)\ =\ e^{2\pi i z}.\ee Let \be L(s,\chi) \ := \
\prod_p \left(1 -\chi(p)p^{-s}\right)^{-1} \ee be the $L$-function
attached to $\chi$; the completed $L$-function is \be
\Lambda(s,\chi) \ := \ \left(\frac{\pi}{q}\right)^{-(s+a(\chi))/2} \gam{s+a(\chi)}{2}
L(s,\chi) \ = \  \frac{\tau(\chi)}{i^{a(\chi)/2}\sqrt{q}}
\Lambda(1-s,\overline{\chi}), \ee where \be\label{eq:defnachi} \twocase{a(\chi)
 \ := \ }{0}{if $\chi(-1) = 1$}{1}{if $\chi(-1) =
-1$.}\ee We write the non-trivial zeros of $\Lambda(s,\chi)$ as $\foh+i\gamma$; if we
assume GRH then $\gamma\in\R$. We have \bea\label{eq:logderivDir}
\frac{\Lambda'(s,\chi)}{\Lambda(s,\chi)}  &=& \frac{\log\frac{q}{\pi}}2 +
\foh \frac{\Gamma'}{\Gamma}\left(\frac{s+a(\chi)}{2}\right)  + \frac{L'(s,\chi)}{L(s,\chi)} \ = \ -\frac{\Lambda'(1-s,\chi)}{\Lambda(1-s,\chi)}, \eea which implies \bea\label{eq:negLprimeoverL} -\frac{L'(1-s,\chi)}{L(1-s,\chi)}= \frac{L'(s,\chi)}{L(s,\chi)} + \log \frac{q}{\pi} + \frac12 \gam{1-s+a(\chi)}{2} + \frac12\gam{s+a(\chi)}{2}.\ \ \ \eea

We study $\mathcal{F}(q)$, the family of non-principal characters modulo a prime $q$ (which will tend to infinity). For each $q$, $|\mathcal{F}(q)| = q-2$. The following lemma is the starting point for the analysis of the sums in the $1$-level density.

\begin{lem}\label{lem:sumsdirichletcharinFq} For $q$ a prime, \be \twocase{\sum_{\chi \in \mathcal{F}(q)} \chi(r) \ = \ -1\ + \ }{q-1}{if $r \equiv 1 \bmod q$}{0}{otherwise.} \ee
\end{lem}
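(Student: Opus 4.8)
The plan is to deduce this from the classical orthogonality relations for Dirichlet characters, the only extra bookkeeping being that $\mathcal{F}(q)$ omits the principal character $\chi_0$, which is what produces the $-1$. Throughout, $r$ is read as coprime to $q$; in the $1$-level density computation $r$ is built out of primes $p \neq q$ (with $q \to \infty$), so this costs nothing, and if instead $q \mid r$ then every $\chi(r) = 0$ and the identity should simply be understood as not applying there.

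First I would use that, since $q$ is prime, $\ZqZf$ is cyclic of order $q-1$; I fix a generator $g$ and write $r \equiv g^k \bmod q$. A Dirichlet character modulo $q$ is determined by the value $\chi(g)$, which may be any $(q-1)$-st root of unity, and each such choice yields a distinct character. Hence $\sum_{\chi \bmod q} \chi(r) = \sum_{\zeta^{q-1} = 1} \zeta^k$. Summing a fixed power over all $(q-1)$-st roots of unity gives $q-1$ when $(q-1) \mid k$ and $0$ otherwise (geometric series), so
\be
\sum_{\chi \bmod q} \chi(r) \ = \ \begin{cases} q-1 & \text{if } r \equiv 1 \bmod q, \\ 0 & \text{otherwise.} \end{cases}
\ee

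Finally I would separate off the principal character: since $\chi_0(r) = 1$ whenever $\gcd(r,q) = 1$,
\be
\sum_{\chi \in \mathcal{F}(q)} \chi(r) \ = \ \sum_{\chi \bmod q} \chi(r) \ - \ \chi_0(r) \ = \ -1 \ + \ \sum_{\chi \bmod q} \chi(r),
\ee
and substituting the displayed value of $\sum_{\chi \bmod q}\chi(r)$ yields the lemma. There is no genuine obstacle here — this is a standard orthogonality computation for the cyclic group $\ZqZf$; the only points to watch are that the $-1$ arises precisely from discarding $\chi_0$, and that the identity as written presumes $\gcd(r,q) = 1$, which is automatic in every application we make of it.
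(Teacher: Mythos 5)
Your proof is correct and follows essentially the same route as the paper: subtract the principal character $\chi_0$ (which contributes the $-1$ since $\chi_0(r)=1$ for $\gcd(r,q)=1$) and invoke the orthogonality relation $\sum_{\chi \bmod q}\chi(r) = q-1$ or $0$ according as $r\equiv 1 \bmod q$ or not; the only difference is that you additionally derive that relation from the cyclicity of $(\Z/q\Z)^{*}$, where the paper simply cites it as well known. Your side remark that the identity presumes $\gcd(r,q)=1$ (the formula would read $-1$ rather than the true value $0$ when $q\mid r$) is a legitimate point the paper glosses over, though it is harmless in every application made of the lemma.
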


\begin{proof} This follows immediately from the orthogonality relations of Dirichlet characters. If $\chi_0$ denotes the principal character, $\chi_0(r) = 0$ if $r \equiv 0 \bmod q$ and $1$ otherwise; the lemma now follows from the well-know relation \be \twocase{\sum_{\chi \bmod q} \chi(r) \ = \ }{q-1}{if $r \equiv 1 \bmod q$}{0}{otherwise.} \ee
\end{proof}

\subsection{Results}\label{sec:statementresults}

Our first result (Theorem \ref{thm:expansionRDalphagamma}) is the Ratios Conjecture's prediction for the sum over $\mathcal{F}(q)$ of the quotient of $L$-functions. The 1-level density can be recovered by a contour integral of its derivative, which we do in Theorem \ref{thm:mainratiospred}. We then compare this prediction to what can be proved in number theory (Theorem \ref{thm:mainNT}). We end the introduction by discussing how standard number theory conjectures lead to extending the support in Theorem \ref{thm:mainNT}, and the extensions agree with the Ratios Conjecture prediction.

\begin{thm}\label{thm:expansionRDalphagamma} Let $q$ be a prime number, and set
\bea R_{\mathcal{F}(q)}(\alpha,\gamma) & \ :=\ &  \sum_{\chi \in \mathcal{F}(q)} \frac{L(1/2 + \alpha, \chi)}{L(1/2 + \gamma, \chi)} \nonumber\\ G_\pm(\alpha) & \ :=\ &  \frac{\Gamma \left(\frac{3}{4}-\frac{\alpha}{2}\right)}{i\Gamma \left(\frac{3}{4}+\frac{\alpha}{2}\right)} \pm \frac{\Gamma \left(\frac{1}{4}-\frac{\alpha}{2}\right)}{\Gamma \left(\frac{1}{4}+\frac{\alpha}{2}\right)}. \eea The Ratios Conjecture prediction for $R_{\mathcal{F}(q)}(\alpha,\gamma)$ is
\bea\label{eq:ratiospredRU} R_{\mathcal{F}(q)}(\alpha,\gamma) & \ = \ & (q-1) \sum_{h\equiv 1 \bmod q} \frac{\mu(h)}{h^{1/2+\gamma}}+(q-1)\left[\frac{G_+(\alpha) e(-1/q)}{2q^{1/2+\alpha}}  + \frac{G_-(\alpha) e(1/q)}{2q^{1/2+\alpha}} \right] \nonumber\\ & & \ \ - \ \frac{\zeta\left(\frac12+\alpha\right)}{\zeta\left(\frac12+\gamma\right)} - \frac{G_+(\alpha)}{2q^{1/2+\alpha}} \frac{\zeta\left(\foh-\alpha\right)}{\zeta\left(\foh+\gamma\right)}- \frac{G_-(\alpha)}{2q^{1/2+\alpha}}\frac{\zeta\left(\foh - \alpha\right)}{\zeta\left(\foh+\gamma\right)} \nonumber\\ & & \ \ + \ O\left(q^{1/2+\gep}\right). \eea The bracketed term is not present if we use the standard Ratios Conjecture, but is present if instead we study a weaker variant  (in the weaker version, we do not drop all terms multiplied by the sign of the functional equation when the signs average to zero, but instead analyze these terms). This distinction is immaterial for our purposes, as the 1-level density involves the derivative, and in both cases these contribute $O(q^{-1/2+\epsilon})$ after we divide by the cardinality of the family.
\end{thm}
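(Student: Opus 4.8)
The plan is to follow the Ratios Conjecture recipe of Conrey--Farmer--Zirnbauer \cite{CFZ1,CFZ2} applied to the unitary family $\mathcal{F}(q)$. First I would write the quotient $L(1/2+\alpha,\chi)/L(1/2+\gamma,\chi)$ using the Dirichlet series expansions: $L(1/2+\alpha,\chi)^{-1}$ is replaced by $\sum_h \mu(h)\chi(h) h^{-1/2-\gamma}$, while for $L(1/2+\alpha,\chi)$ the recipe instructs us to use the approximate functional equation and keep \emph{both} pieces --- the ``first'' piece $\sum_m \chi(m) m^{-1/2-\alpha}$ and the ``dual'' piece coming from the functional equation, which carries the sign $\tau(\chi)^2/(i^{a(\chi)}q)$, the ratio of $\Gamma$-factors, and a sum $\sum_n \overline{\chi}(n) n^{-1/2+\alpha}$. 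Multiplying out, one gets a double sum $\sum_{m,h}\frac{\mu(h)}{m^{1/2+\alpha}h^{1/2+\gamma}}\sum_{\chi}\chi(m)\overline{\chi}(h)$ from the first piece and an analogous double sum $\sum_{n,h}$ weighted by the sign and $\Gamma$-factors from the dual piece.

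The second step is to execute the character averages using Lemma~\ref{lem:sumsdirichletcharinFq}: $\sum_{\chi\in\mathcal{F}(q)}\chi(m)\overline{\chi}(h)$ equals $(q-1)$ when $m\equiv h\bmod q$ and $-1$ otherwise (after accounting for the principal-character correction). The $(q-1)$ contributions, restricted to the ``diagonal'' $m=h$, produce the main terms: from the first piece we get $(q-1)\sum_{h}\mu(h)h^{-1/2-\gamma}\cdot(\text{something})$; one then extends the $m$-sum off the diagonal against the $h$-sum, and a standard manipulation --- pulling out the $\zeta$-factor via $\sum_h \mu(h)/h^{s}$-type identities and Euler products --- yields the $-\zeta(1/2+\alpha)/\zeta(1/2+\gamma)$ term. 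The dual piece similarly gives the two terms with $G_\pm(\alpha)$, where the split into $G_+$ and $G_-$ arises from separately handling even characters ($a(\chi)=0$) and odd characters ($a(\chi)=1$): these contribute the $\Gamma$-factor ratios $\Gamma(1/4-\alpha/2)/\Gamma(1/4+\alpha/2)$ and $\Gamma(3/4-\alpha/2)/(i\,\Gamma(3/4+\alpha/2))$ respectively, and one uses $\sum_{\chi,\;a(\chi)=0}\tau(\chi)^2 = \pm$ (a Gauss-sum evaluation) to produce the factors $e(\mp1/q)$ and the power $q^{-1/2-\alpha}$. The bracketed term is precisely the $(q-1)\times$ (sign-weighted diagonal) contribution from the dual piece; in the strong recipe the sign averages to zero over the family and this term is discarded, while the weaker variant retains it --- but as the theorem notes, after differentiation and division by $|\mathcal{F}(q)|=q-2$ both versions agree to $O(q^{-1/2+\epsilon})$.

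The third step is to bound the error. All the ``off-diagonal'' pieces (the $-1$ coefficient from the non-principal-character sum, acting on $m\not\equiv h$), together with the truncation errors in the approximate functional equation and the tails of the $\mu$-sum, must be shown to be $O(q^{1/2+\epsilon})$. This is where the Ratios recipe is at its most heuristic: the recipe simply asserts square-root cancellation in these sums, and the content of the ``conjecture'' is exactly this bound. So in the \emph{statement} of Theorem~\ref{thm:expansionRDalphagamma} we are not proving this error bound --- we are recording the prediction the recipe produces --- so the proof here is really a derivation: carefully carry the recipe through, collect the five explicit terms, and package everything else as the conjectured $O(q^{1/2+\epsilon})$.

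The main obstacle, then, is bookkeeping rather than genuine analysis: correctly tracking the sign of the functional equation $\tau(\chi)^2/(i^{a(\chi)}q)$ through the parity split, getting the Gauss-sum evaluation $\sum_\chi \chi(r)\tau(\chi)^2$ right so that the $e(\pm1/q)$ phases emerge with the correct sign, and making sure the $\Gamma$-factor ratios land in $G_\pm(\alpha)$ exactly as stated. A secondary subtlety is the precise form of the ``extend the diagonal'' step that converts the $(q-1)\sum_h$ over $m=h$ into the clean $\zeta(1/2+\alpha)/\zeta(1/2+\gamma)$ ratio plus acceptable error; this is the same manipulation used in the symplectic case \cite{Mil3,St}, so I would follow that template closely.
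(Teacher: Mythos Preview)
Your high-level plan matches the paper's --- approximate functional equation for the numerator, M\"obius for the denominator, orthogonality (Lemma~\ref{lem:sumsdirichletcharinFq}), parity split to produce $G_\pm$ --- but several of the concrete expressions you wrote down are wrong in ways that would derail the derivation if followed literally.

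First, the root number is $\tau(\chi)/(i^{a(\chi)}q^{1/2})$, \emph{not} $\tau(\chi)^2/(i^{a(\chi)}q)$: the functional equation relates $L(s,\chi)$ to $L(1-s,\overline{\chi})$, so the Gauss sum appears to the first power. Second, the M\"obius expansion of $1/L(1/2+\gamma,\chi)$ carries $\chi(h)$, not $\overline{\chi}(h)$; hence the first piece involves $\sum_\chi \chi(nh)$ and the orthogonality condition is $nh\equiv 1\bmod q$, not $m\equiv h$. The term $(q-1)\sum_{h\equiv 1}\mu(h)h^{-1/2-\gamma}$ is the $n=1$ slice of this congruence sum, while $-\zeta(1/2+\alpha)/\zeta(1/2+\gamma)$ arises directly from the $-1$ in Lemma~\ref{lem:sumsdirichletcharinFq} acting on the unrestricted $n,h$ sums (then completed), not from any Euler-product manipulation of the diagonal. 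Third --- and this is where the $\tau(\chi)^2$ error really bites --- the phases $e(\pm 1/q)$ do not come from evaluating $\sum_\chi \tau(\chi)^2$. The paper expands $\tau(\chi)=\sum_a \chi(a)e(a/q)$ inside the dual piece, so that after the parity split one is summing $\sum_\chi \chi(\pm ah)\overline{\chi}(m)$; orthogonality forces $\pm ah\equiv m\bmod q$, and the single term $h=m=1$, $a=\pm 1$ is what produces $e(\mp 1/q)$ together with the prefactor $(q-1)/(2q^{1/2+\alpha})$. With $\tau(\chi)^2$ in place of $\tau(\chi)$ neither the phase nor the power of $q$ would come out right.
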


\begin{thm}[Ratios Conjecture Prediction]\label{thm:mainratiospred} Denote the $1$-level density for $\mathcal{F}(q)$ (the family of non-principal Dirichlet characters modulo a prime $q$) by \be
D_{1,\mathcal{F}(q)}(\phi) \ := \ \frac1{q-2}\sum_{\chi\in \mathcal{F}(q)}
\sum_{\gamma_\chi \atop L(1/2+i\gamma_\chi,\chi) = 0} \phi\left(\gamma_\chi
\frac{\log \frac{q}{\pi}}{2\pi}\right), \ee with $\phi$ an even Schwartz function whose Fourier transform has compact support.  The Ratios Conjecture's prediction for the $1$-level density of the family of non-principal Dirichlet characters modulo $q$ is \bea D_{1,\mathcal{F}(q)}(\phi) & \ = \ &  \hphi(0) + \frac1{(q-2)\log \frac{q}{\pi}} \sum_{\chi\in\mathcal{F}(q)} \int_{-\infty}^\infty \phi(\tau) \left[ \frac{\Gamma'}{\Gamma}\left(\frac14 + \frac{a(\chi)}{2}+\frac{\pi i \tau}{\log \frac{q}{\pi}}\right) \right]d\tau \nonumber\\ & & \ \ \ \ + \ O\left(q^{-1/2+\gep}\right),\eea with $a(\chi)$ as in \eqref{eq:defnachi}.
\end{thm}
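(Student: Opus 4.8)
The plan is to recover the $1$-level density from a contour integral of $\sum_{\chi}\tfrac{L'}{L}(s,\chi)$, use the functional equation on one vertical segment, and then feed in the Ratios prediction of Theorem~\ref{thm:expansionRDalphagamma}. Fix $c$ with $\foh<c<1$, say $c=\foh+1/\log q$, and set $g(s):=\phi\!\left((s-\foh)\tfrac{\log(q/\pi)}{2\pi i}\right)$, so that $g(\foh+i\gamma_\chi)=\phi\!\left(\gamma_\chi\tfrac{\log(q/\pi)}{2\pi}\right)$ and, since $\phi$ is even, $g(1-s)=g(s)$. Because every $L(\cdot,\chi)$ with $\chi\in\mathcal{F}(q)$ is entire, the argument principle on the rectangle with vertical sides $\text{Re}(s)=c$ and $\text{Re}(s)=1-c$ — the horizontal segments dropping out as the height tends to infinity because $g$ decays faster than $L'/L$ grows — gives, after substituting $s\mapsto 1-s$ on the left side,
\be \sum_{\gamma_\chi}\phi\!\left(\gamma_\chi\tfrac{\log(q/\pi)}{2\pi}\right)\ =\ \frac1{2\pi i}\int_{(c)}\left[\frac{L'}{L}(s,\chi)-\frac{L'}{L}(1-s,\chi)\right]g(s)\,ds. \ee

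Next I would divide by $q-2$, sum over $\chi\in\mathcal{F}(q)$, and invoke the functional equation \eqref{eq:negLprimeoverL} (legitimate after the harmless relabelling $\chi\mapsto\overline\chi$ permitted by conjugation-invariance of $\mathcal{F}(q)$), which replaces the bracket by $2\tfrac{L'}{L}(s,\chi)+\log\tfrac q\pi+\foh\tfrac{\Gamma'}{\Gamma}\!\big(\tfrac{s+a(\chi)}2\big)+\foh\tfrac{\Gamma'}{\Gamma}\!\big(\tfrac{1-s+a(\chi)}2\big)$. The last three terms are holomorphic on $\text{Re}(s)\in[\foh,c]$, so I would shift their contour to $\text{Re}(s)=\foh$; writing $s=\foh+it$ and substituting $\tau=\tfrac{t\log(q/\pi)}{2\pi}$, the constant $\log\tfrac q\pi$ produces $\widehat\phi(0)$ after dividing by $q-2$ and summing over $\chi$, and — using that $\phi$ is even to fold together the two $\tfrac{\Gamma'}{\Gamma}$ terms via $t\mapsto -t$ — those pieces produce exactly $\tfrac1{(q-2)\log(q/\pi)}\sum_{\chi}\int_{-\infty}^{\infty}\phi(\tau)\,\tfrac{\Gamma'}{\Gamma}\!\big(\tfrac14+\tfrac{a(\chi)}2+\tfrac{\pi i\tau}{\log(q/\pi)}\big)\,d\tau$, the claimed main term (with $a(\chi)$ as in \eqref{eq:defnachi}).

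It then remains to show that $\tfrac{2}{q-2}\cdot\tfrac1{2\pi i}\int_{(c)}\big[\sum_{\chi}\tfrac{L'}{L}(s,\chi)\big]g(s)\,ds=O(q^{-1/2+\gep})$. Since $\partial_\alpha\tfrac{L(\foh+\alpha,\chi)}{L(\foh+\gamma,\chi)}\big|_{\gamma=\alpha}=\tfrac{L'}{L}(\foh+\alpha,\chi)$, one has $\sum_{\chi}\tfrac{L'}{L}(\foh+\alpha,\chi)=\partial_\alpha R_{\mathcal{F}(q)}(\alpha,\gamma)\big|_{\gamma=\alpha}$, so I would substitute the prediction \eqref{eq:ratiospredRU}, differentiate in $\alpha$, and set $\gamma=\alpha$ along the line $\text{Re}(\alpha)=1/\log q$. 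The leading term $(q-1)\sum_{h\equiv 1\bmod q}\mu(h)h^{-1/2-\gamma}$ carries no $\alpha$ and is annihilated; each $q^{-1/2-\alpha}$-term (including the sign-related bracketed ones) has $\alpha$-derivative of size $q^{1/2+\gep}$ on that line, hence contributes $O(q^{-1/2+\gep})$ after division by $q-2$; the $\zeta(\foh+\alpha)/\zeta(\foh+\gamma)$-term gives $-\tfrac{\zeta'}{\zeta}(\foh+\alpha)$ of size $O(q^{\gep})$, hence $O(q^{-1+\gep})$; and the $O(q^{1/2+\gep})$ error in \eqref{eq:ratiospredRU}, which I would take to hold uniformly in a complex neighbourhood of this line, has $\alpha$-derivative $O(q^{1/2+\gep})$ by Cauchy's integral formula on a circle of radius $\asymp 1/\log q$, hence also contributes $O(q^{-1/2+\gep})$. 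Throughout, the super-polynomial decay of $g$ on $\text{Re}(s)=c$ keeps every integral convergent at a cost of only $q^{\gep}$, so collecting the three pieces completes the proof.

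The genuinely delicate step is this last one: one must be able to assert that the Ratios prediction \eqref{eq:ratiospredRU}, with its error $O(q^{1/2+\gep})$, holds with the uniformity needed to differentiate the error term via Cauchy's formula on the segment $\text{Re}(\alpha)=1/\log q$, $\gamma=\alpha$. Granting that, the remaining contour shifts, changes of variables, and bookkeeping of error terms are routine.
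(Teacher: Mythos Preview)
Your proposal is correct and follows essentially the same route as the paper: express the zero sum as a contour integral of $\sum_\chi L'/L$ on the two lines $\Re(s)=c$ and $\Re(s)=1-c$, apply the functional equation \eqref{eq:negLprimeoverL} to the $(1-c)$-piece, feed in the differentiated Ratios prediction (the paper's Lemma \ref{lem:rationsconjpredderiv}), shift to $\Re(s)=\foh$, and rescale. You are somewhat more explicit than the paper about the $\chi\mapsto\bar\chi$ relabeling and about justifying differentiation of the $O(q^{1/2+\gep})$ error via Cauchy's integral formula, but these are precisely the points the paper glosses over or defers to Remark~2.2 of \cite{Mil5}.
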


\begin{thm}[Theoretical Results]\label{thm:mainNT} Notation as in Theorem \ref{thm:mainratiospred}, let $\mathcal{F}(q)$ denote the family of non-principal characters to a prime modulus $q$ ($|\mathcal{F}(q)| = q-2$) and $\phi$ an even Schwartz functions such that $\supp(\hphi) \subset (-\sigma, \sigma)$ for any $\sigma < 2$. Then \bea  D_{1,\mathcal{F}(q)}(\phi)& \ = \ &  \widehat{\phi}(0) + \frac{1}{(q-2)\log \frac{q}{\pi}} \int_{-\infty}^\infty \phi(\tau)\sum_{\chi \in \mathcal{F}(q)} \left[\frac{\Gamma'}{\Gamma}\left(\frac{1}{4}+\frac{a(\chi)}{2}+\frac{\pi i \tau}{\log \frac{q}{\pi}}\right)\right]\,d\tau \nonumber \\
     & & \ \ \ \ \ \ \ \ \ + \ O\left(q^{\frac{\sigma}2-1+\gep}\right). \eea
\end{thm}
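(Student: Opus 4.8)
The plan is to combine the explicit formula for each $L(s,\chi)$ with the orthogonality relation of Lemma~\ref{lem:sumsdirichletcharinFq}. Write $L:=\log(q/\pi)$ throughout, and let $\Lambda(n)$ be the von Mangoldt function. Since $q$ is prime, every $\chi\in\mathcal{F}(q)$ is primitive, so $\Lambda(s,\chi)$ is entire; and $\phi$, being Schwartz with $\widehat\phi$ compactly supported, extends to an entire function of exponential type (Paley--Wiener), so contours may be shifted freely.

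First I would run the standard Riemann--von Mangoldt argument. Integrating $\frac{\Lambda'}{\Lambda}(s,\chi)\,\phi\!\bigl(\tfrac{(s-1/2)L}{2\pi i}\bigr)$ counterclockwise around a rectangle enclosing the critical strip recovers $\sum_{\gamma_\chi}\phi(\gamma_\chi L/2\pi)$ from the residues at the zeros $\tfrac12+i\gamma_\chi$; one then evaluates the two vertical sides --- on the right, at $\Re(s)=1+\gep$, via $\frac{L'}{L}(s,\chi)=-\sum_n\Lambda(n)\chi(n)n^{-s}$, and on the left, at $\Re(s)=-\gep$, after applying the functional equation $\frac{\Lambda'}{\Lambda}(s,\chi)=-\frac{\Lambda'}{\Lambda}(1-s,\overline\chi)$ and then the same Dirichlet series in $1-s$. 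Inserting the decomposition of $\frac{\Lambda'}{\Lambda}(s,\chi)$ from \eqref{eq:logderivDir}: the constant $\tfrac{1}{2} L$ contributes $\widehat\phi(0)$, the archimedean factor $\tfrac{1}{2}\frac{\Gamma'}{\Gamma}\!\bigl(\tfrac{s+a(\chi)}{2}\bigr)$ contributes $\tfrac{1}{L}\int_{-\infty}^{\infty}\phi(\tau)\,\frac{\Gamma'}{\Gamma}\!\bigl(\tfrac{1}{4}+\tfrac{a(\chi)}{2}+\tfrac{\pi i\tau}{L}\bigr)\,d\tau$ (using $a(\overline\chi)=a(\chi)$, and evenness of $\phi$ to combine the two sides), and $\frac{L'}{L}$ contributes
\be
P_\chi\ :=\ -\frac{1}{L}\sum_{n\geq 2}\frac{\Lambda(n)\bigl(\chi(n)+\overline\chi(n)\bigr)}{\sqrt{n}}\,\widehat\phi\!\left(\frac{\log n}{L}\right).
\ee
As $\supp(\widehat\phi)\subset(-\sigma,\sigma)$, this sum runs only over $n<(q/\pi)^\sigma$ and is finite, so the explicit formula is an exact identity. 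Summing it over $\chi\in\mathcal{F}(q)$ and dividing by $q-2$ produces exactly the asserted main term plus the remainder $\tfrac{1}{q-2}\sum_{\chi\in\mathcal{F}(q)}P_\chi$.

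It then remains to show $\tfrac{1}{q-2}\sum_{\chi}P_\chi=O(q^{\sigma/2-1+\gep})$. Interchanging the $n$- and $\chi$-sums and using Lemma~\ref{lem:sumsdirichletcharinFq} (which for $(n,q)=1$ gives $\sum_{\chi}\chi(n)=-1+(q-1)\mathbf{1}_{n\equiv 1\bmod q}$; the only $n$ in range with $q\mid n$ is $n=q$, where $\sum_\chi\chi(q)=0$), one gets $\sum_{\chi}(\chi(n)+\overline\chi(n))=2\bigl(-1+(q-1)\mathbf{1}_{n\equiv 1\bmod q}\bigr)$, hence $\tfrac{1}{q-2}\sum_\chi P_\chi=A-B+O(q^{-3/2})$, where $A:=\tfrac{2}{(q-2)L}\sum_{2\le n<(q/\pi)^\sigma}\Lambda(n)n^{-1/2}\widehat\phi(\log n/L)$ is the contribution of the term $-1$ and $B$ is the same expression carrying the extra factor $q-1$ and with the sum restricted to $n\equiv 1\bmod q$. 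Since $\widehat\phi$ is bounded and $\sum_{n\le X}\Lambda(n)n^{-1/2}\ll\sqrt{X}$ (Chebyshev plus partial summation), $A\ll\tfrac{1}{qL}(q/\pi)^{\sigma/2}\ll q^{\sigma/2-1}$. For $B$, the conditions $n\equiv 1\bmod q$ and $n\ge 2$ force $n=1+kq$ with $k\ge 1$, so $\Lambda(n)\le\log n\ll L$ and $\sum_{k\ge 1,\,1+kq<X}(1+kq)^{-1/2}\ll\sqrt{X}/q$, giving $B\ll\tfrac{1}{L}\cdot L\cdot(q/\pi)^{\sigma/2}/q\ll q^{\sigma/2-1}$ (for $\sigma<1$ the restricted sum is empty once $q$ is large, consistent with this). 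Both are $O(q^{\sigma/2-1+\gep})$, which finishes the proof.

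The only steps that need care are the explicit-formula bookkeeping --- tracking the constant $\widehat\phi(0)$ and the exact argument $\tfrac{1}{4}+\tfrac{a(\chi)}{2}+\tfrac{\pi i\tau}{L}$ of $\Gamma'/\Gamma$, and justifying the contour shifts via Paley--Wiener --- together with the ``diagonal'' sum $B$, which is the genuinely new feature. This sum is empty for $\sigma<1$ and becomes nonempty exactly when $(q/\pi)^\sigma>q$; it is a pleasant feature that the constraint $n\ge q+1$ keeps it no larger than the generic piece $A$, so nothing obstructs the argument below $\sigma=2$ --- precisely the point where the trivial bound $q^{\sigma/2-1}$ stops being $o(1)$. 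Extracting genuine (square-root) cancellation in $A$ and $B$, rather than the trivial bound, is exactly what the Ratios Conjecture predicts and what would be needed to push the admissible support beyond $(-2,2)$, as discussed in the introduction.
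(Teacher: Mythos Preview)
Your proof is correct and follows essentially the same route as the paper: apply the explicit formula to each $L(s,\chi)$, average over $\mathcal{F}(q)$ using the orthogonality relation of Lemma~\ref{lem:sumsdirichletcharinFq}, and bound the two resulting pieces trivially. The only organizational difference is that you package the prime-power sum via $\Lambda(n)$ and bound the congruence-restricted piece $B$ in one stroke using $\Lambda(n)\le\log n$, whereas the paper writes the sum over $p^k$ and splits the analogous term $S_2$ into the sub-cases $k=1$, $k=2$, and $k\ge 3$; both arrive at the same bound $q^{\sigma/2-1}$.
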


We note that we have agreement up to square-root cancelation in the family's cardinality in Theorems \ref{thm:mainratiospred} and \ref{thm:mainNT}, provided that $\supp(\hphi) \subset (-1,1)$; if instead the support is contained in $(-2,2)$ then we have agreement up to a power savings. Unlike previous tests of the Ratios Conjecture, in this case the Ratios Conjecture prediction does not have a lower order term given by an Euler product. This is not surprising, as we expect the $1$-level density to essentially be just $\hphi(0)$ (the integral term that we find arises in a natural way from the Gamma factors in the functional equation; if we were to slightly modify our normalization of the zeros then we could remove this term). One of the most important consequences of the Ratios Conjecture is that it predicts this should be the answer for arbitrary support, though we can only prove it (up to larger error terms) for support in $(-2,2)$. In \cite{Mil6} it is shown how to extend the support in the theoretical results up to $(-4,4)$ or even any arbitrarily large support \emph{if} we assume standard conjectures about how the error terms of primes in arithmetic progression depend on the modulus. Thus the Ratios Conjecture's prediction becomes another way to test the reasonableness of some standard number theory conjectures.

For example, consider the error term in Dirichlet's theorem for primes in arithmetic progression. Let $\pi_{q,a}(x)$ denote the number of primes at most $x$ that are congruent to $a$ modulo $q$. Dirichlet's theorem says that, to first order, $\pi_{q,a}(x) \sim \pi(x)/\phi(q)$. We set $E(x;q,a)$ equal to the difference between the observed and predicted number of primes: \be E(x;q,a) \ := \ \left|\pi_{q,a}(x) - \frac{\pi(x)}{\varphi(q)}\right|. \ee We have \be E(x;q,a) \ = \ O(x^{1/2} (qx)^\gep) \ee under GRH. We expect the error term to have some $q$-dependence; the philosophy of square-root cancelation suggests $(x/q)^{1/2} (qx)^\gep$. Montgomery \cite{Mon1} conjectured bounds of this nature. Explicitly, assume \begin{conj}\label{conj:weakmont} There is a
$\theta \in [0,\foh)$ such that for $q$ prime \be E(x;q,a) \ \ll \
q^\theta \cdot \sqrt{\frac{x}{\varphi(q)}} \cdot (xq)^\gep. \ee \end{conj}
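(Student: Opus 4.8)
The statement is Montgomery's conjecture on the size of the error term in the prime number theorem for arithmetic progressions; it is a well-known open problem --- it implies GRH for all characters mod $q$, and once $q$ is allowed to grow as a power of $x$ it is strictly stronger than GRH --- so what follows describes how one would \emph{attack} it and isolates where the real difficulty lies, rather than a complete proof. The starting point is the explicit formula: for $q$ prime and $(a,q)=1$, writing $\psi(x;q,a)=\sum_{n\le x,\ n\equiv a \bmod q}\Lambda(n)$ with $\Lambda$ the von Mangoldt function,
\be \psi(x;q,a)\ =\ \frac{x}{\varphi(q)}\ -\ \frac1{\varphi(q)}\sum_{\chi\neq\chi_0}\bchi(a)\sum_{\rho_\chi}\frac{x^{\rho_\chi}}{\rho_\chi}\ +\ \text{(explicit lower-order terms)}, \ee
the inner sum running over the non-trivial zeros $\rho_\chi$ of $L(s,\chi)$. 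After partial summation to pass from $\psi$ to $\pi_{q,a}$, the conjectured bound $E(x;q,a)\ll q^\theta\sqrt{x/\varphi(q)}\,(xq)^\gep$ is equivalent to $\sum_{\chi\neq\chi_0}\bchi(a)\sum_{\rho_\chi}x^{\rho_\chi}/\rho_\chi\ll q^{1/2+\theta}\sqrt{x}\,(xq)^\gep$. Under GRH every $\rho_\chi$ has $\Re\rho_\chi=\foh$, so the classical estimate $\sum_{\rho_\chi}x^{\rho_\chi}/\rho_\chi\ll\sqrt{x}\log^2(qx)$ holds for each individual $\chi$; the triangle inequality over the $\varphi(q)$ characters then reproduces only $E(x;q,a)\ll\sqrt{x}\log^2(qx)$ --- the \emph{boundary} case $\theta=\foh$ that the conjecture excludes, and which is exactly the GRH bound $E(x;q,a)=O(x^{1/2}(xq)^\gep)$ recorded above. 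Getting into the range $\theta<\foh$, uniformly in $q$, therefore requires genuine cancellation that GRH by itself does not supply.

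There are two complementary sources of such cancellation, and any realistic plan must combine them. The first is cancellation \emph{among the characters}: the argument of $\sum_{\rho_\chi}x^{\rho_\chi}/\rho_\chi$ should behave essentially independently as $\chi$ ranges over the characters mod $q$, so that $\sum_{\chi\neq\chi_0}\bchi(a)(\cdots)$ has square-root size $\sqrt{\varphi(q)}\cdot\sqrt{x}\,(xq)^\gep$, which is $\theta=0$. This square-root-in-$q$ heuristic is precisely the random-matrix / Ratios-Conjecture philosophy that the present paper is testing, and making it rigorous for a \emph{fixed} residue class $a$ is the heart of the matter. Squaring in $a$ and using orthogonality of characters collapses the target to a mean-square estimate
\be \sum_{\chi \bmod q}\Big|\sum_{\rho_\chi}\frac{x^{\rho_\chi}}{\rho_\chi}\Big|^2\ \ll\ q\,x\,(xq)^\gep, \ee
of exactly the quality the random-matrix model predicts; but this bound, even when available, controls only the average of $E(x;q,a)^2$ over $a$, not a single class. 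The second source is averaging over the modulus: the Bombieri--Vinogradov theorem gives, for $Q\le\sqrt{x}/(\log x)^B$,
\be \sum_{q\le Q}\ \max_{(a,q)=1}\ \varphi(q)\,E(x;q,a)\ \ll_A\ \frac{x}{(\log x)^A}, \ee
an on-average form of the conjecture, and the Barban--Davenport--Halberstam mean-square theorem pushes this, for $q$ in a suitable range, to the sharp ($\theta=0$) bound for all but a sparse set of pairs $(q,a)$. Neither input, however, reaches an individual prime modulus.

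The step I expect to be the genuine obstacle --- indeed the reason this is still only a conjecture --- is precisely that last gap: for a fixed prime $q$ there is no averaging available to absorb a single badly-behaved character, and proving $\theta<\foh$ amounts to ruling out, under GRH or unconditionally, an abnormal clustering of zeros of some $L(s,\chi)$, $\chi \bmod q$, near the line $\Re s=\foh$ --- a conspiracy that the pair-correlation conjecture forbids but GRH alone does not. A second, structural constraint is that the conjecture is outright false with $\theta$ too small once $q$ is too large relative to $x$: the work of Friedlander--Granville shows $E(x;q,a)$ genuinely exceeds $(x/q)^{1/2+\gep}$ when $q>x/(\log x)^A$, so any proof must be range-sensitive and can cover at best, say, $q\le x^{1-\delta}$ --- which is still ample for the applications to $D_{1,\mathcal{F}(q)}(\phi)$, where $x$ is taken to be a fixed power of $q$. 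In short, the realistic plan is to \emph{deduce} Conjecture \ref{conj:weakmont} from a strong zero-density or pair-correlation estimate in the $q$-aspect rather than to prove it from scratch; and since that input is itself open, the Ratios Conjecture's prediction for the $1$-level density verified in Theorems \ref{thm:mainratiospred} and \ref{thm:mainNT} is best read as an independent consistency check on Conjecture \ref{conj:weakmont}, not as a route to proving it.
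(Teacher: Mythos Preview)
The statement is labeled a \emph{conjecture} in the paper, and accordingly the paper offers no proof: it records Montgomery's conjecture on the $q$-dependence of the error in primes in arithmetic progressions and then \emph{assumes} it (in the special case $a=1$, via \cite{Mil6}) in order to extend the admissible support in Theorem~\ref{thm:mainNT} beyond $(-2,2)$. Your write-up correctly identifies this status and gives a sound account of why the conjecture is open --- the explicit-formula reduction, the GRH boundary case $\theta=\foh$, the need for genuine cancellation among the $\bchi(a)\sum_{\rho_\chi}x^{\rho_\chi}/\rho_\chi$, and the Friedlander--Granville obstruction when $q$ is near $x$ are all accurate and to the point. There is thus no gap to flag: you have not proved the statement, but neither does the paper, and your closing remark that the agreement in Theorems~\ref{thm:mainratiospred} and~\ref{thm:mainNT} should be read as a consistency check on Conjecture~\ref{conj:weakmont} rather than a route to it matches exactly how the paper frames the relationship.
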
 Combining the number theory calculations of Miller in \cite{Mil6} (which assume Conjecture \ref{conj:weakmont}  in the special case of $a=1$) with the Ratios Conjecture calculations in this paper, we find number theory and the Ratios Conjecture prediction agree for arbitrary finite support with a power savings.

Alternatively, consider the following conjecture:

\begin{conj}\label{conj:miller}  There exists
an $\eta \in [0,1)$ such that for prime $q$, \be\label{eq:conjmill2b} E(x;q,1)^2 \ \ll \ q^\eta \cdot \frac1{q}
\sum_{a = 1 \atop (a,q) = 1}^m E(x;q,a)^2. \ee \end{conj}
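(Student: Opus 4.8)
\textbf{Proof proposal for Conjecture~\ref{conj:miller}.} Since this is a conjecture, by a ``proof'' I mean a reduction to one clean analytic inequality, the heuristic that fixes the expected value of $\eta$, and an honest account of the obstruction. First I would expand everything in Dirichlet characters mod $q$. By partial summation the admissible size of $\eta$ is unchanged if we replace $\pi_{q,a}(x)$ by $\psi(x;q,a):=\sum_{n\le x,\ n\equiv a\,(q)}\Lambda(n)$ and $\pi(x)/\varphi(q)$ by $\psi(x)/\varphi(q)$; write $E'(x;q,a)$ for the resulting error and $\psi(x,\chi):=\sum_{n\le x}\Lambda(n)\chi(n)$. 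Character orthogonality gives, for $(a,q)=1$ and up to a term $O(\log x)$, that $E'(x;q,a)=\varphi(q)^{-1}\big|\sum_{\chi\ne\chi_0}\bchi(a)\psi(x,\chi)\big|$, and a second application (Plancherel on $(\Z/q\Z)^{*}$) gives, up to negligible terms, $\sum_{(a,q)=1}E'(x;q,a)^2=\varphi(q)^{-1}\sum_{\chi\ne\chi_0}|\psi(x,\chi)|^2$ together with $E'(x;q,1)^2=\varphi(q)^{-2}\big|\sum_{\chi\ne\chi_0}\psi(x,\chi)\big|^2$. Since $\varphi(q)=q-1$, Conjecture~\ref{conj:miller} is then \emph{equivalent} to the single estimate
\be \Big|\sum_{\chi\ne\chi_0}\psi(x,\chi)\Big|^2\ \ll\ q^{\eta}\sum_{\chi\ne\chi_0}|\psi(x,\chi)|^2 , \ee
i.e.\ to the assertion that the $q-2$ numbers $\psi(x,\chi)$ do not conspire to align in the ``all-ones'' direction---equivalently, that the residue class $a\equiv1$ is not abnormally large in the $L^2$ sense. (The left side is $\varphi(q)^2$ times the square of the error in the prime-ideal theorem for the cyclotomic field $\Q(\zeta_q)$; the right side is $\varphi(q)$ times the variance of primes in the residue classes mod $q$.)

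The cheap bound is Cauchy--Schwarz over the $q-2$ non-principal characters, which gives exactly $\eta=1$---just outside the admissible range $[0,1)$, and in fact no more than ``one term is at most the sum of all $q-2$ terms''. So the entire content of the conjecture is a power saving $q^{-\delta}$ over this trivial estimate, and I expect it to hold comfortably, indeed with $\eta=0$: by the Barban--Davenport--Halberstam / Montgomery--Hooley circle of ideas one expects $\frac1q\sum_{(a,q)=1}E'(x;q,a)^2\asymp x\log q/q$ (for $q$ up to roughly $x^{1-\gep}$), while square-root cancellation for the single class $a\equiv1$ predicts $E'(x;q,1)^2$ to be of the same order $x\log q/\varphi(q)$ as a typical term---the Chebyshev / Rubinstein--Sarnak bias carried by $a\equiv1$ (squares of primes $p\equiv\pm1\bmod q$) being of strictly smaller order than this typical fluctuation. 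In character language this is just the fact that a sum of $\varphi(q)$ ``pseudorandom'' terms of comparable size satisfies $|\sum|^2\asymp\sum|\cdot|^2$. So $\eta=0$ is the expected truth; the difficulty is entirely in proving it.

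The hard part---indeed the reason this is only a conjecture---is that even GRH for the family $\{L(s,\chi):\chi\bmod q\}$ will not break $\eta=1$. GRH bounds each $|\psi(x,\chi)|\ll\sqrt x\,\log^2(qx)$ \emph{individually} but says nothing about whether the $\psi(x,\chi)$ align; feeding the pointwise bound into the sum only reproduces $\big|\sum_{\chi\ne\chi_0}\psi(x,\chi)\big|^2\ll q^2x\log^4(qx)$, and there is no unconditional \emph{individual}-modulus lower bound on $\sum_{\chi\ne\chi_0}|\psi(x,\chi)|^2$ anywhere near the conjectured size $\asymp q\,x\log q$ (such a bound is known only on average over $q$, via Barban--Davenport--Halberstam). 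Conversely, granting the believed variance asymptotic, Conjecture~\ref{conj:miller} with $\eta<1$ would force $E(x;q,1)\ll q^{(\eta-1)/2}\sqrt{x\log q}$, i.e.\ better-than-square-root cancellation for the single progression $a\equiv1$---a conclusion stronger than anything GRH supplies. A realistic write-up would therefore stop at (i) the character reduction above, with the heuristic pinning $\eta=0$, and (ii) an ``almost all'' statement: combining the Barban--Davenport--Halberstam lower bound $\sum_{q\le Q}\sum_{(a,q)=1}E'(x;q,a)^2\gg Qx\log x$ (which forces $\sum_{(a,q)=1}E'(x;q,a)^2\gg x\log q$ for all but $o(Q)$ primes $q\le Q$) with GRH yields, for those $q$, only the near-boundary value $\eta=1+\gep$---morally convincing alongside the heuristic, but still short of the admissible range. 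A clean theorem valid for \emph{every} prime $q$ as stated looks beyond present technology; the point of the conjecture is precisely that it is a \emph{testable} hypothesis---combined with the calculations of \cite{Mil6} it brings the theoretical $1$-level density into agreement with the Ratios Conjecture prediction for arbitrarily large support.
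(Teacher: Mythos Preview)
The paper does not attempt to prove Conjecture~\ref{conj:miller}; it is stated precisely as a conjecture, with the only supporting commentary being the remark that (i) $\eta=1$ holds trivially (since the left side is one of the terms on the right) and (ii) $\eta=\gep$ is the expected truth. There is therefore no paper-proof to compare against.

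Your discussion is correct as far as it goes and is considerably more informative than the paper's one-line remark. The character-theoretic reduction is sound: Plancherel on $(\Z/q\Z)^*$ does convert the conjecture into the alignment statement $\big|\sum_{\chi\ne\chi_0}\psi(x,\chi)\big|^2 \ll q^\eta \sum_{\chi\ne\chi_0}|\psi(x,\chi)|^2$, and Cauchy--Schwarz over the $q-2$ non-principal characters recovers exactly the trivial endpoint $\eta=1$ that the paper mentions. Your identification of the obstruction---that GRH bounds each $|\psi(x,\chi)|$ pointwise but carries no information about phase alignment, and that no individual-modulus lower bound on the variance $\sum_{\chi\ne\chi_0}|\psi(x,\chi)|^2$ is available---is precisely why this remains open. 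One small point of phrasing: the implication $E(x;q,1)\ll q^{(\eta-1)/2}\sqrt{x\log q}$ (granting the expected variance asymptotic) is, at $\eta=0$, exactly square-root cancellation in the natural scale $x/\varphi(q)$ rather than ``better than'' it; it is, however, beyond what GRH alone supplies, which is your real point. Otherwise your writeup is an honest account of the state of affairs, and your final sentence correctly captures the role the conjecture plays in the paper.
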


Again combining the results of Miller \cite{Mil6} with this work, we find agreement between number theory and the Ratios Conjecture prediction, though this time only for test functions with $\supp(\hphi) \subset (-4+2\eta, 4-2\eta)$. Thus the Ratios Conjecture may be interpreted as providing additional evidence for these conjectures.

\begin{rek} These two conjectures are quite reasonable. The bound in the first is true when $\theta = 1/2$ by GRH. The bound in the second is trivially true when $\eta = 1$, as the error on the left side is then contained in the sum on the right. We expect the first to hold for $\theta=0$ and the second for $\eta = \epsilon$. \end{rek}

\begin{rek} Similar to \cite{HR}, we assume $q$ is prime (see \cite{Mil6} for details on how to remove this assumption in the theoretical results). Our purpose in this paper is to describe the Ratios Conjecture's recipe and show agreement between its prediction and number theory, highlighting the new features that arise in this test of the Ratios Conjecture which have not surfaced in other investigations. We therefore assume $q$ is prime for ease of exposition, as it simplifies some of the arguments. \end{rek}

The paper is organized as follows. We describe the Ratios Conjecture's recipe in \S\ref{sec:ratiosconj} and prove Theorems \ref{thm:expansionRDalphagamma} and \ref{thm:mainratiospred}. We then prove Theorem \ref{thm:mainNT} in the following section. There are obviously similarities between the computations in this paper and those in \cite{Mil3}, where the family of quadratic Dirichlet characters was studied. The computations there, at times, were deliberately done in greater generality than needed, and thus we refer the reader to \cite{Mil3} for details at times (such as the proof of the explicit formula).

%%%%%%%%%%%%%%%%%%%%%%%%%%%%%%%%%%%%%%%%%%%%%%%%%%%%%%%%%%%%%%%%%%%%%%%%%%%%%%%%%%%%%%%%%%%%%%%%
%%%%%%%%%%%%%%%%%%%%%%%%%%%%%%%%%%%%%%%%%%%%%%%%%%%%%%%%%%%%%%%%%%%%%%%%%%%%%%%%%%%%%%%%%%%%%%%%
%%%%%%%%%%%%%%%%%%%%%%%%%%%%%%%%%%%%%%%%%%%%%%%%%%%%%%%%%%%%%%%%%%%%%%%%%%%%%%%%%%%%%%%%%%%%%%%%
\section{Ratios Conjecture}\label{sec:ratiosconj}

\subsection{Recipe}\label{sec:ratiosrecipe}

We follow the recipe of the Ratios Conjecture and state its prediction for the 1-level density of the family of non-principal, primitive Dirichlet characters of prime modulus $q \to \infty$. We denote this family by $\mathcal{F}(q)$, and note $|\mathcal{F}(q)| = q-2$.

The Ratios Conjecture concerns estimates for \be R_{\mathcal{F}(q)}(\alpha,\gamma)\ :=\ \sum_{\chi \in \mathcal{F}(q)} \frac{L(1/2 + \alpha, \chi)}{L(1/2 + \gamma, \chi)}; \ee \emph{the convention is \textbf{not} to divide by the family's cardinality}. The conjectured formulas are believed to hold up to errors of size $O(|\mathcal{F}(q)|^{1/2+\gep})$. We briefly summarize how to use the Ratios conjecture to predict answers; for more details see \cite{CFZ1}, as well as \cite{CS,Mil3}.

\ben

\item Use the approximate functional equation to expand the numerator into two sums plus a remainder. The first sum is over $m$ up to $x$ and the second over $n$ up to $y$, where $xy$ is of the same size as the analytic conductor (typically one takes $x \sim y \sim \sqrt{q}$). We ignore the remainder term.

\item Expand the denominator by using the generalized Mobius function.

\item Execute the sum over $\mathcal{F}(q)$, replacing each summand by the diagonal term in its expected value when averaged over the family; however, before executing these sums replace any product over epsilon factors (arising from the signs of the functional equations) with the average value of the sign of the functional equation in the family. One may weaken the Ratios Conjecture by not discarding these terms; this is done in \cite{Mil5,MilMo}, where as predicted it is found that these terms do not contribute. To provide a better test, we also do not drop these terms (see Remark \ref{rek:weakerratios} for a discussion of which terms, for this family, may be ignored).

\item Extend the $m$ and $n$ sums to infinity (i.e., complete the products).

\item Differentiate with respect to the parameter $\alpha$, and note that the size of the error term does not significantly change upon differentiating. There is no error in this step, which can be justified by elementary complex analysis because all terms under consideration are analytic. See Remark 2.2 of \cite{Mil5} for details.

\item A contour integral involving $\frac{\partial}{\partial \alpha}R_{\mathcal{F}(q)}(\alpha,\gamma)\Big|_{\alpha=\gamma=s}$ yields the 1-level density, where the integration is on the line $\Re(s) = c > 1/2$.

\een

\subsection{Approximate Functional Equation and Mobius Inversion}

We now describe the steps in greater detail. The approximate functional equation (see for example \cite{IK})  states
\bea L\left(\frac{1}{2} + \alpha, \chi\right) & \ =\ & \sum_{n \leq x} \frac{\chi(n)}{n^{1/2+\alpha}} + \frac{\tau(\chi)}{i^{a(\chi)}q^{\foh}}\ q^{\foh-s} \frac{\Gamma \left(\frac{1}{4}-\frac{\alpha}{2}+\frac{a(\chi)}{2}\right)}{\Gamma \left(\frac{1}{4}+\frac{\alpha}{2}+\frac{a(\chi)}{2}\right)} \sum_{m \leq y} \frac{\overline{\chi}(m)}{m^{1-s}} \nonumber\\ & & \ \ \ + \ {\rm Error}, \eea
where \be \twocase{a(\chi) \ = \ }{0}{if $\chi(-1) = 1$}{1}{if $\chi(-1) =
-1$}\ee and \be \tau(\chi) \ = \ \sum_{x \bmod m} \chi(x) e(x/m) \ee is the Gauss sum (which is of modulus $\sqrt{m}$ for $\chi$ non-principal). We ignore the error term in the approximate functional equation when we expand $L(1/2+\alpha,\chi)$ in our analysis of $R_{\mathcal{F}(q)}(\alpha,\gamma)$.

By Mobius Inversion we have
\be \frac{1}{L(\frac{1}{2} + \gamma, \chi)}\ =\ \sum_{h=1}^{\infty} \frac{\mu(h) \chi (h)}{h^{1/2 + \gamma}}\ee where \be \threecase{\mu(h) \ = \ }{1}{if $h=1$}{(-1)^r}{if $h = p_1\cdots p_r$ is the product of $r$ distinct primes}{0}{otherwise.} \ee  

We combine the above to obtain an expansion for $R_{\mathcal{F}(q)}(\alpha,\gamma)$. Note that $R_{\mathcal{F}(q)}(\alpha,\gamma)$ involves evaluating the $L$-functions at $1/2+\alpha$ and $1/2+\gamma$; thus the $q^{1/2-s}/q^{1/2}$ term is just $q^{-1/2-\alpha}$ when $s=1/2+\alpha$.
\bea\label{eq:firstexpansionRDalphagamma}
& & R_{\mathcal{F}(q)} (\alpha,\gamma)\nonumber\\ & = & \ \sum_{\chi} \sum_{h=1}^{\infty} \frac{\mu(h) \chi (h)}{h^{1/2 + \gamma}} \left(\sum_{n \leq x} \frac{\chi(n)}{n^{1/2+\alpha}} + \frac{\tau(\chi)}{i^{a(\chi)}q^{\foh+\alpha}} \frac{\Gamma \left(\frac{1}{4}-\frac{\alpha}{2}+\frac{a(\chi)}{2}\right)}{\Gamma \left(\frac{1}{4}+\frac{\alpha}{2}+\frac{a(\chi)}{2}\right)} \sum_{m \leq y} \frac{\overline{\chi}(m)}{m^{1-s}}\right) \nonumber\\  & =&  \ \sum_{n \leq x}  \sum^{\infty}_{h=1}  \sum_{\chi \in \mathcal{F}(q)} \frac{\mu(h) \chi(nh)}{n^{\foh+\alpha} h^{\foh + \gamma}} +  \sum_{\chi} \frac{\tau(\chi)}{i^{a(\chi)}q^{\frac{1}{2} +\alpha}}\frac{\Gamma \left(\frac{1}{4}-\frac{\alpha}{2}+\frac{a(\chi)}{2}\right)}{\Gamma \left(\frac{1}{4}+\frac{\alpha}{2}+\frac{a(\chi)}{2}\right)}  \sum^{\infty}_{h=1}  \sum_{m \leq y}  \frac{\mu(h)\chi(h) \overline{\chi}(m)}{m^{\frac{1}{2} - \alpha}h^{\frac{1}{2} + \gamma}} \nonumber\\ \eea

\begin{rek}\label{rek:weakerratios} If we assume the standard form of the Ratios Conjecture, we may ignore the contribution from the second piece above. This is because the signs of the functional equations essentially average to zero, and thus according to the recipe there is no contribution from these terms. To see this, note the sign of the functional equation is $\tau(\chi) / i^{a(\chi)} q^{1/2}$. We have \be i^{-a(\chi)}\ =\ \frac{\chi(-1)+1}2 + \frac{\chi(-1)-1}{2}\ i. \ee Thus, expanding the Gauss sum, we see it suffices to show sums such as \be \mathcal{C} \ = \ \frac{1}{q-2}\sum_{\chi \in \mathcal{F}(q)} \sum_{x \bmod m} \frac{\chi(\pm x) \exp(2\pi i x / q)}{q^{1/2}} \cdot \frac{i^{1\pm 1}}{2} \ee are small. We may extend the summation to include the principal character at a cost of $O(q^{-3/2})$ (as the sum over $x$ is $-1$ for the principal character). We now have a sum over all characters, with $\sum_{\chi \bmod q} \chi(\pm x) = q-1$ if $\pm x \equiv 1 \bmod q$ and 0 otherwise. Thus we find \be \mathcal{C} \ = \ \frac{i^{1\pm 1} \exp(\pm 2\pi i /q)}{2q^{1/2}} \frac{q-1}{q-2}  + O(q^{-3/2}); \ee as this is of size $q^{-1/2}$, it is essentially zero and thus, according to the Ratios recipe, it should be ignored. We choose not to ignore these terms to provide a stronger test of the Ratios Conjecture.
\end{rek}

\subsection{Executing the sum over $\mathcal{F}(q)$ and completing the sums}

Returning to \eqref{eq:firstexpansionRDalphagamma}, we want to pass the summation over $\chi$ through everything to the product of the expansion of $\tau(\chi)$ as a character sum and the $\chi(nh)$ and $\chi(h)\overline{\chi}(m)$ terms below (note, as explained in Remark \ref{rek:weakerratios}, we may drop these terms if we assume the standard Ratios Conjecture; we desire a stronger test and thus we will partially analyze these terms). Unfortunately the Gamma factors and the $i^{-a(\chi)}$ factor in the sign of the functional equation depend on $\chi$. Fortunately this dependence is weak, as $a(\chi) = 0$ if $\chi(-1) = 1$ and $-1$ otherwise. To facilitate summing over the characters we introduce factors $\frac{\chi(-1)+1}2$ and $\frac{\chi(-1)-1}2$ below, giving
\bea 	& & R_{\mathcal{F}(q)} (\alpha,\gamma) \ = \ \sum_{n \leq x}  \sum^{\infty}_{h=1}  \sum_{\chi \in \mathcal{F}(q)} \frac{\mu(h) \chi(nh)}{n^{\foh+\alpha} h^{\foh + \gamma}} \nonumber\\
										&	& \ \ + \  \sum_{\chi}\Bigg[\frac{\chi(-1)+1}{2} \frac{\tau(\chi)}{iq^{\frac{1}{2} + \alpha}}\frac{\Gamma \left(\frac{3}{4}-\frac{\alpha}{2}\right)}{\Gamma \left(\frac{3}{4}+\frac{\alpha}{2}\right)}  \sum^{\infty}_{h=1}  \sum_{m \leq y}  \frac{\mu(h)\chi(h) \overline{\chi}(m)}{m^{\frac{1}{2} - \alpha}h^{\frac{1}{2} + \gamma}}  \nonumber \\
											& & \ \ +\  \frac{\chi(-1)-1}{2} \frac{\tau(\chi)}{q^{\frac{1}{2} + \alpha}}\frac{\Gamma \left(\frac{1}{4}-\frac{\alpha}{2}\right)}{\Gamma \left(\frac{1}{4}+\frac{\alpha}{2}\right)}  \sum^{\infty}_{h=1}  \sum_{m \leq y}  \frac{\mu(h)\chi(h) \overline{\chi}(m)}{m^{\frac{1}{2} - \alpha}h^{\frac{1}{2} + \gamma}} \Bigg].
\eea Distributing and regrouping yields
\begin{align}\label{eq:rewrittenRDalphagamma} &R_{\mathcal{F}(q)} (\alpha,\gamma) \ = \ \sum_{n \leq x}  \sum^{\infty}_{h=1}  \sum_{\chi \in \mathcal{F}(q)} \frac{\mu(h) \chi(nh)}{n^{\foh+\alpha} h^{\foh + \gamma}}\nonumber\\
&\hspace{3 mm}\ +\left(\frac{\Gamma \left(\frac{3}{4}-\frac{\alpha}{2}\right)}{i\Gamma \left(\frac{3}{4}+\frac{\alpha}{2}\right)} + \frac{\Gamma \left(\frac{1}{4}-\frac{\alpha}{2}\right)}{\Gamma \left(\frac{1}{4}+\frac{\alpha}{2}\right)}\right)\sum_{\chi \in \mathcal{F}(q)} \frac{\chi(-1)\tau(\chi)}{2q^{\frac{1}{2}+\alpha}}\sum_{h=1}^\infty\sum_{m\leq y} \frac{\mu(h)\chi(h)\bar{\chi}(m)}{m^{\frac{1}{2}-\alpha}h^{\frac{1}{2}+\gamma}}\nonumber\\
&\hspace{3 mm}\ +\left(\frac{\Gamma \left(\frac{3}{4}-\frac{\alpha}{2}\right)}{i\Gamma \left(\frac{3}{4}+\frac{\alpha}{2}\right)} - \frac{\Gamma \left(\frac{1}{4}-\frac{\alpha}{2}\right)}{\Gamma \left(\frac{1}{4}-\frac{\alpha}{2}\right)}\right)\sum_{\chi \in \mathcal{F}(q)} \frac{\tau(\chi)}{2q^{\frac{1}{2}+\alpha}}\sum_{h=1}^\infty\sum_{m\leq y}\frac{\mu(h)\chi(h)\bar{\chi}(m)}{m^{\frac{1}{2}-\alpha}h^{\frac{1}{2}+\gamma}} \nonumber\\ & = \ \ \mathcal{S}_1 + \mathcal{S}_2 + \mathcal{S}_3, \end{align} \emph{where again \emph{only} the first term $\mathcal{S}_1$ is present if we assume the strong form of the Ratios Conjecture.}

The proof of Theorem \ref{thm:expansionRDalphagamma} follows immediately from the above expansion and Lemmas \ref{lem:ratiosconjmathcals1} and \ref{lem:S2S3computationlemma}.

%\sum_{h \equiv 1 \bmod q} \frac{\mu(h)}{h^{\frac12+\gamma}}

\begin{lem}\label{lem:ratiosconjmathcals1} The Ratios Conjecture's recipe predicts \be \mathcal{S}_1\ = \ (q-1) \sum_{h \equiv 1 \bmod q} \frac{\mu(h)}{h^{\frac12+\gamma}} - \frac{\zeta\left(\frac12+\alpha\right)}{\zeta\left(\frac12+\gamma\right)} + {\rm small}. \ee  \end{lem}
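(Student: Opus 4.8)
The plan is to insert the exact character-sum evaluation of Lemma~\ref{lem:sumsdirichletcharinFq} into $\mathcal{S}_1$, carry out the recipe's completion of the $n$-sum (step 4), and read off the two main terms, pushing everything else into ``small''. Since
$$\sum_{\chi\in\mathcal{F}(q)}\chi(nh)\ =\ -1+(q-1)\,\mathbf{1}_{nh\equiv1\bmod q},$$
the quantity $\mathcal{S}_1=\sum_{n\le x}\sum_{h=1}^{\infty}\sum_{\chi\in\mathcal{F}(q)}\frac{\mu(h)\chi(nh)}{n^{1/2+\alpha}h^{1/2+\gamma}}$ splits as $\mathcal{S}_1=-\mathcal{A}+(q-1)\mathcal{B}$, with
$$\mathcal{A}\ =\ \Big(\sum_{n\le x}\frac{1}{n^{1/2+\alpha}}\Big)\Big(\sum_{h\ge1}\frac{\mu(h)}{h^{1/2+\gamma}}\Big),\qquad \mathcal{B}\ =\ \sum_{\substack{n\le x,\ h\ge1\\ nh\equiv1\bmod q}}\frac{\mu(h)}{n^{1/2+\alpha}h^{1/2+\gamma}}.$$

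For $\mathcal{A}$ the $h$-sum is already complete and equals $1/\zeta(1/2+\gamma)$ (absolutely for $\Re\gamma>1/2$, otherwise by analytic continuation, which the recipe permits), and completing the $n$-sum to infinity as in step 4 gives $\zeta(1/2+\alpha)$; thus $-\mathcal{A}=-\zeta(1/2+\alpha)/\zeta(1/2+\gamma)$, the second displayed term. For $(q-1)\mathcal{B}$ I would isolate the term $n=1$: there the congruence becomes $h\equiv1\bmod q$, so this term is exactly $(q-1)\sum_{h\equiv1\bmod q}\mu(h)/h^{1/2+\gamma}$, the first displayed term. It remains to absorb the contribution of $2\le n\le x$ into ``small''. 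Since $x\sim\sqrt q<q$, each such $n$ is invertible mod $q$ and $h$ runs over the single nonzero residue class $n^{-1}\bmod q$; expanding the congruence in Dirichlet characters mod $q$ gives
$$\sum_{h\equiv n^{-1}\bmod q}\frac{\mu(h)}{h^{1/2+\gamma}}\ =\ \frac{1}{q-1}\left[\frac{1}{\zeta(1/2+\gamma)\,(1-q^{-1/2-\gamma})}+\sum_{\psi\ne\chi_0}\frac{\overline{\psi}(n^{-1})}{L(1/2+\gamma,\psi)}\right]\ =\ O\!\big(q^{-1+\gep}\big),$$
using the (GRH-consistent) bound $\frac{1}{q-1}\sum_{\psi\bmod q}|1/L(1/2+\gamma,\psi)|=O(q^{\gep})$. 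Summing this against $n^{-1/2-\alpha}$ over $2\le n\le x$ and multiplying by $q-1$ produces $O(q^{1/4+\gep})$, comfortably within the $O(q^{1/2+\gep})$ error of Theorem~\ref{thm:expansionRDalphagamma}. Collecting the three pieces yields the asserted identity.

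The only genuinely delicate point is the bound on the $2\le n\le x$ tail, which comes down to controlling $1/L(1/2+\gamma,\psi)$ averaged over characters $\psi$ modulo $q$ near the central line; this is precisely the sort of estimate that is either assumed (GRH) or subsumed in the recipe's licence to neglect off-diagonal contributions, so, as with the parallel computation for quadratic characters in \cite{Mil3}, nothing more is required here --- the lemma records the recipe's prediction, not an unconditional theorem. I would also note that $\sum_{h\ge1}\mu(h)h^{-1/2-\gamma}$ and $\sum_{h\equiv1\bmod q}\mu(h)h^{-1/2-\gamma}$ are only meaningful as analytic continuations off the region $\Re\gamma>1/2$, but this is harmless: in the eventual 1-level density one sets $\alpha=\gamma=s$ with $\Re s=c>1/2$, where all sums in sight converge.
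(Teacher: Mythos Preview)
Your approach is essentially the same as the paper's: split $\mathcal{S}_1$ via Lemma~\ref{lem:sumsdirichletcharinFq}, complete the $n$-sum in the $-1$ piece to obtain $-\zeta(\tfrac12+\alpha)/\zeta(\tfrac12+\gamma)$, and in the congruence piece isolate $n=1$ to get $(q-1)\sum_{h\equiv1\bmod q}\mu(h)h^{-1/2-\gamma}$, absorbing the $2\le n\le x$ contribution into ``small''. The paper handles that last step purely heuristically, invoking step~3 of the recipe (keep only the diagonal) and the precedents in \cite{CFZ1,CS}; it makes no quantitative claim.

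Your attempt to do more and actually bound the $2\le n\le x$ tail contains an arithmetic slip. From the stated input $\tfrac{1}{q-1}\sum_{\psi}|1/L(\tfrac12+\gamma,\psi)|=O(q^{\gep})$ one only gets $\sum_{\psi\ne\chi_0}\overline{\psi}(n^{-1})/L(\tfrac12+\gamma,\psi)=O(q^{1+\gep})$, so the bracketed expression is $O(q^{1+\gep})$ and, after dividing by $q-1$, the $h$-sum is $O(q^{\gep})$, not $O(q^{-1+\gep})$; you have effectively used the factor $1/(q-1)$ twice. With the corrected bound the tail becomes $(q-1)\cdot O(q^{\gep})\cdot\sum_{2\le n\le\sqrt q}n^{-1/2}=O(q^{5/4+\gep})$, which is far outside the claimed $O(q^{1/2+\gep})$. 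This does not damage the lemma itself, since (as you note in your final paragraph, and as the paper does throughout) the recipe's step~3 simply discards these off-diagonal terms by fiat; but the displayed $O(q^{-1+\gep})$ and the subsequent $O(q^{1/4+\gep})$ should be removed or replaced by the honest appeal to the recipe that you make at the end.
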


\begin{proof} By Lemma \ref{lem:sumsdirichletcharinFq}, we have \be \twocase{\sum_{\chi \in \mathcal{F}(q)} \chi(r) \ = \ -1\ + \ }{q-1}{if $r \equiv 1 \bmod q$}{0}{otherwise.} \ee According to Step 3 of the Ratios Conjecture's recipe, we now replace the sum over the family with the diagonal term in its expected value. We are assisted in this by Lemma \ref{lem:sumsdirichletcharinFq}, which gives us explicit formulas for these sums.

%According to the Ratios Conjecture, we should only keep the `diagonal' (i.e., the `main') term in the family sum.
%Unlike the other families investigated (the symplectic family of quadratic characters in \cite{Mil3} or the orthogonal
%families of cuspidal newforms in \cite{Mil5,MilMo}), it is not immediately clear what the Ratios Conjecture means by
%`diagonal'. Clearly we always have a contribution of -1 in summing over the family; however, what do we do about the
%factor of $q-1$? This is a large factor, but it occurs rarely, specifically only when $r \equiv 1 \bmod q$? For now we
%keep this term and analyze the consequences of keeping it below.

We thus find that \bea\mathcal{S}_1 &\ = \ & \sum_{n \leq x}  \sum^{\infty}_{h=1}  \sum_{\chi \in \mathcal{F}(q)} \frac{\mu(h) \chi(nh)}{n^{\foh+\alpha} h^{\foh + \gamma}} \nonumber\\
&=& (q-1)\sum_{nh \equiv 1 (q) \atop n \le x}\frac{\mu(h)}{n^{\foh+\alpha} h^{\foh + \gamma}} - \sum_{n \leq x} \sum_{h=1}^{\infty}\frac{\mu(h)}{n^{\foh+\alpha} h^{\foh + \gamma}}. \eea

The second sum above is readily evaluated after we complete it by sending $x\to\infty$; it is just $\zeta(1/2+\alpha)/\zeta(1/2+\gamma)$. We are of course completely ignoring convergence issues; however, under the Riemann Hypothesis the $h$-sum converges for $\Re(\gamma) > 0$. The $n$-sum is initially finite, and should be replaced with a finite Euler product approximation to the Riemann zeta function; letting $x\to\infty$ gives $\zeta(1/2+\alpha)$. In our application below we will divide by the family's cardinality. Thus this piece will contribute $O(1/q)$ to the 1-level density, and yield a negligible term (in fact, a term significantly smaller than the conjectured error of size $O(q^{-1/2+\epsilon})$.

We now analyze the sum with $nh \equiv 1 \bmod q$, first giving a rough estimate and then a more refined one. While it is multiplied by the large factor $q-1$, it also has the condition $nh\equiv 1 \bmod q$. This congruence greatly lessens the contribution as we have $n$'s and $h$'s in arithmetic progression. Further, we haven't divided by the cardinality of the family (which is of size $q$). Finally, we have the Mobius factor $\mu(h)$ in the numerator. Thus it is reasonable to expect that the part that depends on $\alpha$ and $\gamma$ will be small; in other words, the sum should be well-approximated by the $n=h=1$ term, which gives $q-1$. While this factor is large (it leads to a term of size 1 when we divide by the cardinality of the family), there is \emph{no} dependence on $\alpha$ or $\gamma$. As it is the derivative of $R_{\mathcal{F}(q)}(\alpha,\gamma)$ that arises in our computation of the 1-level density, this large term is actually harmless.

Arguing more carefully, we note that $nh \equiv 1 \bmod q$ ensures that unless $n=h=1$ the product $nh$ will be at least $q$, and thus such terms should be small. We can readily handle the contribution when $n=1$; this is just
\be (q-1) \sum_{h \equiv 1 \bmod q} \frac{\mu(h)}{h^{\frac12+\gamma}}. \ee This term should be $q-1$ plus small for two reasons: after the $h=1$ term all denominators are one more than a multiple of $q$, and sums of the Mobius function in arithmetic progressions should be small \cite{Wa}). For us, however, what matters is that this sum is independent of $\alpha$, and thus when we differentiate with respect to $\alpha$ in a few steps it will vanish, and therefore not contribute to the 1-level density.

We are thus left with the contribution from $nh \equiv 1 \bmod q$ with $2 \le n \le \sqrt{q}$. Arguing in the same spirit as Conrey, Farmer and Zirnbauer \cite{CFZ1} (Section 5.1) and Conrey and Snaith \cite{CS} (page 597), we absorb the contribution of these terms into the error. The reason is that in the papers referenced above, the authors state then when replacing summands with their expected value, only the diagonal term is kept. Specifically, when we let $\alpha$ and $\gamma$ be complex numbers of the form $c+it$, the oscillatory nature of $t$ in the sum (not to mention the Mobius function and the presence of a factor of $q$ in the denominators) leads these terms to contribute at a lower order. Alternatively, we provide another heuristic for why this term should be negligible. Though it is multiplied by a factor of $q-1$, it has on the order of $1/q$ as many terms as the sum without the $nh\equiv 1 \bmod q$ restriction, and thus should contribute at the same order.
\end{proof}

%\textbf{SADLY ARGUMENT BELOW IS SLIGHTLY OFF. PROBLEM IS THAT THE $h$-SUM IS UNBOUNDED. MAYBE EPSTEIN ZETA FUNCTIONS?}
%More rigorously, note that it is not until Step 4 that we are to send the parameters to infinity. In the approximate functional equation, we take $x$ and $y$ to be of size $\sqrt{q}$. As $n \le x$, this means that the only $n$ term which survives is $n=1$. Thus we are left with a sum over $h$ such that $h \equiv 1 \bmod q$, which allows us to write $h = q\ell + 1$. The resulting sum is $O(q^{1/2-\gamma}/\log^A q)$ for any $A$. This follows from partial summation, using classical bounds (see \cite{Wa}) for the sum of the Mobius function in arithmetic progressions, specifically \be \sum_{h \le u \atop h \equiv a \bmod m} \ \ll\ u \exp\left(-C(\log u)^{2/3} / (\log\log u)^{1/5}\right), \ \ \ m \ \le \ (\log u)^B \ee with $C$ and the implied constant depending on $A$.
%The Ratios Conjecture recipe states that, when executing the summation over the family, only the `diagonal' (i.e., the `main') term should be kept. We can think of two ways to interpret this: (1) the $nh\equiv 1 \bmod q$ is not a `diagonal' term, or (2) the $nh\equiv 1 \bmod q$ terms contribute $(q-1) + {\rm small}$. While these two interpretations yield different values for $R_{\mathcal{F}(q)}(\alpha,\gamma)$, they give the same contribution for the derivative, which is all we care about. See also Remark \ref{rek:droppingtermsinmathcalS1} for more reasons why the first sum may safely be ignored.

We isolate two points from the above arguments that will be of use later.

\begin{rek} The important point to note in evaluating $\mathcal{S}_1$ is that, for the purposes of differentiating, the first term is independent of $\alpha$ and thus does not contribute to the one-level density. For the factor of $\zeta(1/2+\alpha)/\zeta(1/2+\gamma)$, as there is \emph{no} $q$-dependence, upon dividing by the cardinality of the family we find a contribution of size $O(1/q)$ to the 1-level density. Returning to the first term, while the factor $q-1$ (arising from $h=1$) is large even upon division by the family's cardinality, it is independent of $\alpha$ and $\gamma$, and thus does not contribute when we execute Step 5, differentiating with respect to the parameters. Note that a factor of this size \emph{must} be present; to see this, consider the special case $\alpha = \gamma$. There $R_{\mathcal{F}(q)}(\alpha,\gamma) = |\mathcal{F}(q)| = q-2$, which is $q-1 + O(q^{1/2+\epsilon})$. \end{rek}

\begin{rek}\label{rek:droppingtermsinmathcalS1} Returning to the analysis of the first piece of $\mathcal{S}_1$, note that $n \le x \sim \sqrt{q}$ means that in each congruence restriction $nh\equiv 1 \bmod q$, there is at most one $n$ that works. In the special case of $h\equiv 1 \bmod q$, this means $n=1$. If $n=2$ then $h \ge (q+1)/2$, and thus the first term in this $h$-arithmetic progression is large. In particular, as $n \le x \sim \sqrt{q}$ we have $h \ge \sqrt{q}$ for $n \ge 2$. All these arguments strongly imply that this sum should be negligible (except perhaps for the $n=h=1$ term, which is constant). \end{rek}

Before analyzing the remaining pieces of \eqref{eq:rewrittenRDalphagamma} (\emph{which are not present if we assume the strong form of the Ratios Conjecture}), it is convenient to set
\be\label{eq:defnGpm} G_\pm(\alpha)\ =\ \frac{\Gamma \left(\frac{3}{4}-\frac{\alpha}{2}\right)}{i\Gamma \left(\frac{3}{4}+\frac{\alpha}{2}\right)} \pm \frac{\Gamma \left(\frac{1}{4}-\frac{\alpha}{2}\right)}{\Gamma \left(\frac{1}{4}+\frac{\alpha}{2}\right)}.\ee

\begin{lem}\label{lem:S2S3computationlemma} We have \bea \mathcal{S}_2  &\ = \ & \frac{(q-1) G_+(\alpha) e(-1/q)}{2q^{1/2+\alpha}} - \frac{G_+(\alpha)}{2q^{1/2+\alpha}} \frac{\zeta\left(\foh-\alpha\right)}{\zeta\left(\foh+\gamma\right)} + {\rm small} \nonumber\\  \mathcal{S}_3 & \ = \ & \frac{(q-1)G_-(\alpha) e(1/q)}{2q^{1/2+\alpha}}  - \frac{G_-(\alpha)}{2q^{1/2+\alpha}}\frac{\zeta\left(\foh - \alpha\right)}{\zeta\left(\foh+\gamma\right)} + {\rm small}. \eea \end{lem}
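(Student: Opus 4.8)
The plan is to handle $\mathcal{S}_2$ and $\mathcal{S}_3$ in parallel, since they differ only by the factor $\chi(-1)$ inside the sum over the family; after the Gauss sum is opened this factor merely sends $a\mapsto -a$, which is exactly what interchanges $e(1/q)$ with $e(-1/q)$. Write $\nu=1$ for $\mathcal{S}_2$ and $\nu=0$ for $\mathcal{S}_3$, pull the constant $G_\pm(\alpha)/(2q^{1/2+\alpha})$ out front (the upper sign going with $\mathcal{S}_2$), and reduce to analyzing $\Sigma_\nu := \sum_{\chi\in\mathcal{F}(q)}\chi(-1)^\nu\,\tau(\chi)\sum_{h=1}^\infty\sum_{m\le y}\mu(h)\chi(h)\overline{\chi}(m)\,m^{-(1/2-\alpha)}h^{-(1/2+\gamma)}$. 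First I would expand $\tau(\chi)=\sum_{a=1}^{q-1}\chi(a)e(a/q)$ (the term $a\equiv 0$ drops out, $\chi(0)=0$), interchange orders of summation to put the sum over the family innermost, and note that only $h,m$ coprime to $q$ contribute (otherwise $\chi(h)$ or $\overline{\chi}(m)$ vanishes), so that $\overline{\chi}(m)=\chi(m^{-1})$ with $m^{-1}$ the inverse of $m$ modulo $q$.

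The inner sum is then $\sum_{\chi\in\mathcal{F}(q)}\chi\big((-1)^\nu a h m^{-1}\big)$, which by Lemma~\ref{lem:sumsdirichletcharinFq} equals $-1+(q-1)\,\mathbf{1}[(-1)^\nu ah\equiv m\bmod q]$. Carrying out the $a$-sum: the $-1$ contributes $-\sum_{a=1}^{q-1}e(a/q)=1$ uniformly in $h,m$, while for each $(h,m)$ with $q\nmid hm$ the indicator selects the unique residue $a_0\equiv (-1)^\nu m h^{-1}\bmod q$, contributing $(q-1)e\big((-1)^\nu m h^{-1}/q\big)$. Thus $\Sigma_\nu=\sum_{q\nmid hm}\mu(h)\,m^{-(1/2-\alpha)}h^{-(1/2+\gamma)}\big(1+(q-1)e((-1)^\nu m h^{-1}/q)\big)$, which splits naturally into a ``$1$''-piece and a ``$(q-1)e$''-piece.

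For the ``$1$''-piece I follow Steps~3--4 of the recipe and complete the $m$-sum to infinity; it becomes $\big(\sum_{q\nmid h}\mu(h)h^{-(1/2+\gamma)}\big)\big(\sum_{q\nmid m}m^{-(1/2-\alpha)}\big)$, and deleting the Euler factor at $q$ turns this into $\zeta(\tfrac12-\alpha)/\zeta(\tfrac12+\gamma)$ times $(1-q^{-1/2+\alpha})(1-q^{-1/2-\gamma})^{-1}=1+{\rm small}$; restoring the prefactor gives the $\zeta(\tfrac12-\alpha)/\zeta(\tfrac12+\gamma)$ term of the statement, with overall sign fixed by the $\Gamma$-factor and $\chi(-1)$ bookkeeping in \eqref{eq:rewrittenRDalphagamma} — exactly as in Lemma~\ref{lem:ratiosconjmathcals1} the ``$-1$'' of Lemma~\ref{lem:sumsdirichletcharinFq} produced the $\zeta(\tfrac12+\alpha)/\zeta(\tfrac12+\gamma)$ term there. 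As in that proof, we are cavalier about convergence, which can be justified under RH for $\Re(\gamma)>0$. For the ``$(q-1)e$''-piece I isolate the $h=m=1$ term, which is exactly $(q-1)e((-1)^\nu/q)=(q-1)e(\mp 1/q)$; together with the prefactor this is the leading term $(q-1)G_\pm(\alpha)e(\mp 1/q)/(2q^{1/2+\alpha})$, and all remaining $(h,m)\ne(1,1)$ go into ``small.''

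The step I expect to be the main obstacle is precisely this last one: controlling the tail $(q-1)\sum_{(h,m)\ne(1,1),\,q\nmid hm}\mu(h)e((-1)^\nu m h^{-1}/q)\,m^{-(1/2-\alpha)}h^{-(1/2+\gamma)}$. Unlike $\mathcal{S}_1$, where the restriction $nh\equiv 1\bmod q$ made the analogous sum sparse, here the $m$-sum runs over a full range up to $y\sim\sqrt q$, so the gain must come from oscillation of the additive character $e((-1)^\nu m h^{-1}/q)$ — for $q\nmid h$ the frequency $h^{-1}\bmod q$ is a nonzero residue, forcing cancellation in the incomplete $m$-sum — reinforced by M\"obius cancellation in $h$ and the extra factor $q$ in the denominators; within the Ratios recipe this is treated exactly as the analogous tails in Conrey--Farmer--Zirnbauer \cite{CFZ1} (Section~5.1) and Conrey--Snaith \cite{CS}. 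In any case, as noted in the Remark following Lemma~\ref{lem:ratiosconjmathcals1}, only the $\alpha$-dependence matters for the $1$-level density, and after differentiating and dividing by $|\mathcal{F}(q)|=q-2$ every term in the lemma has size $O(q^{-1/2+\gep})$ — which is why Theorem~\ref{thm:expansionRDalphagamma} simply absorbs it into the error.
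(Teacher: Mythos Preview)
Your proposal is correct and follows essentially the same approach as the paper's proof: expand $\tau(\chi)$, apply Lemma~\ref{lem:sumsdirichletcharinFq} to the character sum, split into the ``$-1$'' piece (which after completing the $m$-sum yields $\zeta(\tfrac12-\alpha)/\zeta(\tfrac12+\gamma)$) and the ``$(q-1)$'' congruence piece (whose $h=m=1$ term gives $(q-1)e(\mp 1/q)$), and absorb the tail into ``small'' by the same heuristics and references (\cite{CFZ1}, \cite{CS}) the paper invokes. The one cosmetic difference is that you solve the congruence $ah\equiv m\bmod q$ for $a$ and parametrize by $(h,m)$, whereas the paper keeps the triple $(a,h,m)$ with the constraint; this is an equivalent bookkeeping choice and does not change the argument.
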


%where, similar to Lemma \ref{lem:ratiosconjmathcals1}, depending on how we interpret the Ratios Conjecture's recipe of keeping only the `main' terms the first term in the expansion for $\mathcal{S}_2$ and $\mathcal{S}_3$ above may or may not be present (for our purposes, this won't matter as both are $O(q^{-1/2+\epsilon})$ after differentiation and division by the cardinality of the family).

\begin{proof} Essentially the only difference between the analysis of $\mathcal{S}_2$ and $\mathcal{S}_3$ is that $\mathcal{S}_2$ has (effectively) $\chi(-h)$ instead of $\chi(h)$. We therefore just remark on the minor changes needed to evaluate $\mathcal{S}_2$ after evaluating $\mathcal{S}_3$.

Let $e(z) = \exp(2\pi i z)$. Using the expansion for the Gauss sum $\tau(\chi)$ (when we expand it below we start the sum at $a=1$ and not $a=0$ as $\chi(0) = 0$) we find
\bea \mathcal{S}_3 & \ = \ & \frac{G_-(\alpha)}{2} \sum_{m \leq y} \sum_{h=1}^{\infty} \sum_{\chi \in \mathcal{F}(q)} \frac{\tau(\chi)\mu(h)\chi(h)\bar{\chi}(m)}{m^{\frac{1}{2}-\alpha}h^{\frac{1}{2}+\gamma}q^{\frac{1}{2} + \alpha}}\nonumber\\
&=& \frac{G_-(\alpha)}{2} \sum_{h=1}^\infty \sum_{m\leq y}\sum_{\chi} \sum_{a=1}^{q-1} \frac{\chi(ah)\bar{\chi}(m) \mu(h)e\left(\frac{a}{q}\right)}{q^{\frac{1}{2}+\alpha}m^{\frac{1}{2}-\alpha}h^{\frac{1}{2}+\gamma}} \nonumber\\
&=& \frac{q-1}{2 q^{\frac{1}{2}+\alpha}} G_-(\alpha) \sum_{ah=m (q)} \frac{e\left(\frac{a}{q}\right)\mu(h)}{m^{\frac{1}{2}-\alpha}h^{\frac{1}{2}+\gamma}} - \frac{G_-(\alpha)}{2 q^{\frac{1}{2}+\alpha}} \sum_{h=1}^\infty \sum_{m\leq y}\sum_{a=1}^{q-1}\frac{e\left(\frac{a}{q}\right)\mu(h)}{m^{\frac{1}{2}-\alpha}h^{\frac{1}{2}+\gamma}} \nonumber\\
&=& K_1 + K_2.\eea

We analyze $K_2$ first. As always, we ignore all convergence issues in replacing a sum with an Euler product. The sum over $a$ gives -1 (if we had a sum over all $a$ modulo $q$ the exponential sum would vanish). As in the proof of Lemma \ref{lem:ratiosconjmathcals1}, the $m$-sum gives $\zeta(1/2-\alpha)$ and the $h$-sum gives $1/\zeta(1/2+\gamma)$. Thus \be K_2 \ = \ - \frac{G_-(\alpha)}{2 q^{\frac{1}{2}+\alpha}} \frac{\zeta\left(\foh - \alpha\right)}{\zeta\left(\foh+\gamma\right)}. \ee

In analyzing $K_1$, we find ourselves in a similar situation as the one we encountered in Lemma \ref{lem:ratiosconjmathcals1}. There is only a contribution when $ah \equiv m \bmod q$, in which case we find \be K_1 \ = \  \frac{q-1}{2 q^{\frac{1}{2}+\alpha}} G_-(\alpha) \sum_{ah=m (q)} \frac{e\left(\frac{a}{q}\right)\mu(h)}{m^{\frac{1}{2}-\alpha}h^{\frac{1}{2}+\gamma}}. \ee For similar reasons, we expect this piece to be small. We have enormous oscillation in the numerator, we have a congruence $ah \equiv m \bmod q$ which drastically reduces the number of summands, and the piece is multiplied by a factor of the order $q^{1/2 - \alpha}$, which when divided by the family's cardinality and differentiated will give a piece on the order of $q^{-1/2+\epsilon}$. We thus don't expect a contribution to the derivative of $R_{\mathcal{F}(q)}(\alpha,\gamma)$ from this piece; see Remark \ref{rek:howhandlek1} for additional comments.

For $\mathcal{S}_2$, having $\chi(-h)$ instead of $\chi(h)$ now leads to $a=-1$ and $h=m=1$ for the main term, giving \be \mathcal{S}_2 \ = \ \frac{(q-1) G_+(\alpha) e(-1/q)}{2q^{1/2+\alpha}} - \frac{G_+(\alpha)}{2q^{1/2+\alpha}} \frac{\zeta\left(\foh-\alpha\right)}{\zeta\left(\foh+\gamma\right)}. \ee
\end{proof}

\begin{rek}\label{rek:howhandlek1} In the analysis of $K_1$ above, we only kept the $a=m=h=1$ term. If we do want to attempt to analyze this term's contributions in greater detail, arguing in a similar manner as in Lemma \ref{lem:ratiosconjmathcals1} and Remark \ref{rek:droppingtermsinmathcalS1} gives that the `main' component of this sum is probably from $a=m=h=1$, which gives $\frac{q-1}{2q^{1/2+\alpha}}\ G_-(\alpha) e(1/q)$. As $\mathcal{S}_2$ and $\mathcal{S}_3$ are not present if we assume the standard form of the Ratios Conjecture, we content ourselves with just keeping this `main' term here (namely the $a=m=h=1$ term); a more detailed analysis keeping more terms would lead to the same final result in the 1-level density. Alternatively, we could look at $K_1$ as being multiplied by a factor of $q-1$ relative to $K_2$; however, the congruence means we have on the order of $1/q$ as many terms, and thus $K_1$ and $K_2$ should lead to similarly sized contributions to the 1-level density. As $K_2$ contributes $O(q^{-1/2+\gep})$ to the 1-level density, it is reasonable to posit a similarly sized contribution for $K_1$. Further, unlike the analysis of $\mathcal{S}_1$, \emph{every} term of $K_1$ has some $\alpha$ dependence through the factor of $q^{1/2+\alpha}$ in the denominator. Thus there will always be oscillation with $\alpha$, and by the `diagonal' comments in \cite{CFZ1} and \cite{CS} this term should yield a negligible contribution.
\end{rek}

\subsection{Differentiation and the contour integral}\label{sec:diffandcontourint} We follow \cite{CS,Mil5} to determine the Ratios Conjecture's prediction for the $1$-level density. The first step is to compute the derivative of $R_{\mathcal{F}(q)}(\alpha,\gamma)$.

\begin{lem}\label{lem:rationsconjpredderiv} Let $G_\pm(\alpha)$ be as in \eqref{eq:defnGpm}.
We have \bea & & \frac{\partial R_{\mathcal{F}(q)}}{\partial \alpha}\Bigg|_{\alpha=\gamma=r}
\nonumber\\ & &  = \ \frac{q-1}{2q^{1/2}}\Bigg[\frac{G_+'(r) q^r - r G_+(r)q^{r-1}}{q^{2r}}\ e(-1/q)+ \frac{G_-'(r) q^r - r G_-(r)q^{r-1}}{q^{2r}}\ e(1/q)\Bigg] \nonumber\\ & & \ \ - \  \frac1{2q\zeta\left(\frac12+r\right)} \Bigg[ \frac{\left(\left(G_+'(r)+G_-'(r)\right)\zeta\left(\frac12-r\right) + \left(G_+(r)+G_-(r)\right)\zeta'\left(\frac12-r\right)\right)q^r}{q^{2r}} \nonumber\\ & & \ \ \ \ \ \ \ \ \ -\ \frac{r \left(G_+(r)+G_-(r)\right)\zeta\left(\frac12-r\right)q^{r-1}}{q^{2r}}    \Bigg]
\nonumber\\ & & \ \ - \ \frac{\zeta'\left(\frac12+r\right)}{\zeta\left(\frac12+r\right)} \ + \ O\left(q^{1/2+\gep}\right), \eea where the bracketed quantities are present or not depending on whether we are arguing as in the standard Ratios Conjecture or instead not automatically dropping any terms multiplied by signs of functional equations averaging to zero; as the contribution from these terms will be $O(q^{-1/2+\gep})$, it is immaterial whether or not we include them.
\end{lem}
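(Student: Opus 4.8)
I would prove the lemma by differentiating, with respect to $\alpha$, the closed-form Ratios prediction for $R_{\mathcal{F}(q)}(\alpha,\gamma)$ established in Theorem \ref{thm:expansionRDalphagamma}, and then setting $\alpha=\gamma=r$. Split \eqref{eq:ratiospredRU} into four pieces: (i) the $\alpha$-independent sum $(q-1)\sum_{h\equiv 1(q)}\mu(h)h^{-1/2-\gamma}$; (ii) the ``sign'' main terms $\frac{q-1}{2q^{1/2+\alpha}}\big(G_+(\alpha)e(-1/q)+G_-(\alpha)e(1/q)\big)$; (iii) the pure zeta quotient $-\zeta(\foh+\alpha)/\zeta(\foh+\gamma)$; and (iv) the secondary sign terms $-\frac{G_+(\alpha)+G_-(\alpha)}{2q^{1/2+\alpha}}\,\zeta(\foh-\alpha)/\zeta(\foh+\gamma)$, where (ii) and (iv) are present or absent according to whether one argues with the weaker or the standard form of the recipe. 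Piece (i) has zero $\alpha$-derivative — precisely the observation stressed in the remarks after Lemma \ref{lem:ratiosconjmathcals1} that the large but $\alpha$-free factor $q-1$ is harmless — so it drops out, and the whole differentiation goes through unchanged whether or not (ii) and (iv) are retained.

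For pieces (ii)--(iv) I would apply the product and quotient rules, using $\partial_\alpha q^{-1/2-\alpha}=-q^{-1/2-\alpha}\log q$ and $\partial_\alpha\zeta(\foh-\alpha)=-\zeta'(\foh-\alpha)$, and then specialize $\alpha=\gamma=r$. Differentiating (ii) by the quotient rule applied to $G_\pm(\alpha)q^{-1/2-\alpha}$ and pulling out the common factor $\frac{q-1}{2q^{1/2}}$ produces the first bracket of the statement; (iii) gives $\partial_\alpha\zeta(\foh+\alpha)/\zeta(\foh+\gamma)=-\zeta'(\foh+r)/\zeta(\foh+r)$ after specializing; and (iv), handled by the product rule on its three $\alpha$-dependent factors $G_\pm(\alpha)$, $q^{-1/2-\alpha}$, and $\zeta(\foh-\alpha)$, yields the $\frac{1}{2q\zeta(\foh+r)}$ bracket, with the $\zeta'(\foh-r)$ and $\log q$ contributions coming from the last two factors. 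Assembling the three contributions reproduces the displayed identity; this is mechanical bookkeeping, the only care being to keep the signs and the powers of $q$ straight.

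The sole non-calculus point is that differentiation does not worsen the error term, i.e.\ $\partial_\alpha O(q^{1/2+\gep})=O(q^{1/2+\gep})$. After completing the $m$- and $h$-sums, every term entering \eqref{eq:ratiospredRU} is a product of $\Gamma$-quotients, powers $q^{-\alpha}$, and $\zeta$-quotients, hence holomorphic in $\alpha$ on a disc about $r$ whose radius is bounded below independently of $q$ (the poles of these factors sit at fixed points). The error is the difference between $R_{\mathcal{F}(q)}(\alpha,\gamma)$ and this holomorphic main term, so it too is holomorphic on that disc and bounded there by $Cq^{1/2+\gep}$; Cauchy's estimate for the first derivative, applied on a circle of this fixed radius, then bounds $\partial_\alpha(\text{error})$ by $O(q^{1/2+\gep})$ as well. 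This is Step 5 of the recipe; compare Remark 2.2 of \cite{Mil5}. I expect the uniformity in $q$ of the radius of holomorphy to be the only real (if routine) point to verify; everything else is elementary differentiation of explicit $\Gamma$-, power-, and $\zeta$-factors.
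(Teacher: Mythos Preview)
Your approach is exactly the paper's: the paper's entire proof is the single sentence ``The proof follows from a straightforward differentiation of \eqref{eq:ratiospredRU},'' and you carry out precisely that differentiation, splitting into the same four pieces and invoking the same justification (Step 5 of the recipe, Remark~2.2 of \cite{Mil5}) for why the error term survives differentiation. Your write-up is considerably more detailed than the paper's one-liner, but the method is identical.
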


\begin{proof} The proof follows from a straightforward differentiation of \eqref{eq:ratiospredRU}.
\end{proof}

\begin{rek} Note there is no $q$-dependence in $G_\pm(\alpha)$, and thus its derivatives are independent of $q$. \end{rek}

We now prove Theorem \ref{thm:mainratiospred}. Recall it was\\

\noindent \textbf{Theorem \ref{thm:mainratiospred}}. \emph{Denote the $1$-level density for $\mathcal{F}(q)$ (the family of non-principal Dirichlet characters modulo a prime $q$) by \be
D_{1,\mathcal{F}(q)}(\phi) \ := \ \frac1{q-2}\sum_{\chi\in \mathcal{F}(q)}
\sum_{\gamma_\chi \atop L(1/2+i\gamma_\chi,\chi) = 0} \phi\left(\gamma_\chi
\frac{\log \frac{q}{\pi}}{2\pi}\right), \ee with $\phi$ an even Schwartz function whose Fourier transform has compact support.  The Ratios Conjecture's prediction for the $1$-level density of the family of non-principal Dirichlet characters modulo $q$ is} \bea D_{1,\mathcal{F}(q)}(\phi) & \ = \ &  \hphi(0) + \frac1{\log \frac{q}{\pi}} \sum_{\chi\in\mathcal{F}(q)} \int_{-\infty}^\infty \phi(\tau) \left[ \frac{\Gamma'}{\Gamma}\left(\frac14 + \frac{a(\chi)}{2}+\frac{\pi i \tau}{\log \frac{q}{\pi}}\right) \right]d\tau \nonumber\\ & & \ \ \ \ + \ O\left(q^{-1/2+\gep}\right) .\eea

\begin{proof} As the argument is essentially the same as in \cite{CS,Mil5}, we merely highlight the proof. We first compute the unscaled $1$-level density with $g$ an even
Schwartz function:  \be S_{1;\mathcal{F}(q)}(g) \ = \ \frac1{q-2}\sum_{\chi\in \mathcal{F}(q)}
\sum_{\gamma_\chi \atop L(1/2+i\gamma_\chi,\chi) = 0} g\left(\gamma_\chi
\right). \ee Let $c \in \left(\foh+\frac1{\log q},
\frac34\right)$; thus \bea S_{1;\mathcal{F}(q)}(g) & \ = \ & \frac1{q-2}\sum_{\chi\in \mathcal{F}(q)} \frac1{2\pi i} \left(\int_{(c)} - \int_{(1-c)}\right)
\frac{L'(s,\chi)}{L(s,\chi)}
g\left(-i\left(s-\foh\right)\right)ds \nonumber\\ &=&
S_{1,c;\mathcal{F}(q)}(g) + S_{1,1-c;\mathcal{F}(q)}(g). \eea We argue as
in \S3 of \cite{CS} or \S3 of \cite{Mil5}. We first analyze the integral on the line
$\Re(s) = c$. By GRH and the rapid decay of $g$, for large $t$ the
integrand is small. We use the Ratios Conjecture (Lemma
\ref{lem:rationsconjpredderiv} with $r = c - \foh + it$) to replace the
$\sum_\chi L'(s,\chi)/L(s,\chi)$ term when $t$ is small. We may then extend the integral to all of $t$ because of the rapid decay of $g$. As the integrand is regular at $r=0$ we can move
the path of integration to $c=1/2$. The contribution from the integral on the $c$-line is now readily bounded, as $\partial R_{\mathcal{F}(q)}/\partial \alpha\Big|_{\alpha=\gamma=r}$ is just the contribution from $\zeta'(1/2+r)/\zeta(1/2+r) + O(q^{1/2+\gep})$. As we divide by $q-2$, the big-Oh term is negligible. Note that the $\zeta'/\zeta$ term is independent of $q$, and thus gives a contribution of size $O(1/q)$ when we divide by the family's cardinality.

We now study $S_{1,1-c;\mathcal{F}(q)}(g)$: \bea & &
S_{1,1-c;\mathcal{F}(q)}(g)\nonumber\\ & & \ = \ \frac1{q-2}\sum_{\chi\in \mathcal{F}(q)}
\frac{-1}{2\pi i}  \int_{\infty}^{-\infty}
\frac{L'(1-(c+it),\chi)}{L(1-(c+it),\chi)}
g\left(-i\left(\foh-c\right)-t\right) (-idt).\ \ \ \ \ \ \ \ \  \eea  We use \eqref{eq:negLprimeoverL}, a consequence of the functional equation, with $s=c+it$. We get another $\sum_\chi L'(c+it,\chi)/L(c+it,\chi)$, which does not contribute by Lemma \ref{lem:rationsconjpredderiv}. We again shift contours to $c=1/2$. We are left with the integral against $\log(q/\pi)$ and the two Gamma factors, which may be combined as $\phi$ is even when $c=1/2$. We are left with \bea S_{1,1-c;\mathcal{F}(q)}(g) & \ = \ & \frac1{2\pi} \int_{-\infty}^\infty \left[\log \frac{q}{\pi} + \frac{\Gamma'}{\Gamma}\left(\frac14 + \frac{a(\chi)}{2}+\frac{\pi i \tau}{\log \frac{q}{\pi}}\right) \right] g(t)dt + O\left(q^{-1/2+\gep}\right).\nonumber\\  \eea

%We use the
%functional equation \be L(s,\chi) \ = \ \epsilon_\chi X_L(s,\chi) L(1-s,\chi) \ee
%to find that \be \frac{L'(1-(c+it),\chi)}{L(1-(c+it),\chi)} \ = \ -
%\frac{L'(c+it,\chi)}{L(c+it,\chi)} + \frac{X_L'(c+it,\chi)}{X_L(c+it,\chi)}. \ee

In investigating zeros near the central point, it is convenient to
renormalize them by the logarithm of the analytic conductor. Let
$g(t) = \phi\left(\frac{t \log (q/\pi)}{2\pi}\right)$. A straightforward
computation shows that $\widehat{g}(\xi) = \frac{2\pi}{\log (q/\pi)}\
\hphi(2\pi \xi / \log \frac{q}{\pi})$. The (scaled) $1$-level density for the
family $\mathcal{F}(q)$ is therefore \be D_{1,\mathcal{F}(q);R}(\phi) \ = \  \frac1{q-2}
\sum_{\chi\in \mathcal{F}(q)} \sum_{\gamma_\chi \atop
L(1/2+i\gamma_\chi,\chi) = 0} \phi\left(\gamma_\chi \frac{\log
\frac{q}{\pi}}{2\pi}\right) \ = \ S_{1;\mathcal{F}(q)}(g) \ee (where $g(t) =
\phi\left(\frac{t \log R}{2\pi}\right)$ as before). We replace $g(t)$ with $\phi( t\log(q/\pi) / 2\pi)$, and then change variables by letting $\tau = t \log(q/\pi)/2\pi$ and we find \bea
D_{1,\mathcal{F}(q);R}(\phi)  & \ = \ &   \frac1{\log \frac{q}{\pi}}\sum_{\chi\in\mathcal{F}(q)} \int_{-\infty}^\infty \phi(\tau) \left[\log \frac{q}{\pi} +  \frac{\Gamma'}{\Gamma}\left(\frac14 + \frac{a(\chi)}{2}+\frac{\pi i \tau}{\log \frac{q}{\pi}}\right) \right]d\tau \nonumber\\ & & \ \ \ \ + \ O\left(q^{-1/2+\gep}\right) \nonumber\\ & = & \hphi(0) + \frac1{\log \frac{q}{\pi}} \sum_{\chi\in\mathcal{F}(q)} \int_{-\infty}^\infty \phi(\tau) \left[ \frac{\Gamma'}{\Gamma}\left(\frac14 + \frac{a(\chi)}{2}+\frac{\pi i \tau}{\log \frac{q}{\pi}}\right) \right]d\tau \nonumber\\ & & \ \ \ \ + \ O\left(q^{-1/2+\gep}\right) .\eea
\end{proof}

%%%%%%%%%%%%%%%%%%%%%%%%%%%%%%%%%%%%%%%%%%%%%%%%%%%%%%%%%%%%%%%%%%%%%%%%%%%%%%%%%%%%%%%%%%%%%%%%
%%%%%%%%%%%%%%%%%%%%%%%%%%%%%%%%%%%%%%%%%%%%%%%%%%%%%%%%%%%%%%%%%%%%%%%%%%%%%%%%%%%%%%%%%%%%%%%%
%%%%%%%%%%%%%%%%%%%%%%%%%%%%%%%%%%%%%%%%%%%%%%%%%%%%%%%%%%%%%%%%%%%%%%%%%%%%%%%%%%%%%%%%%%%%%%%%

\section{Theoretical Results}

We prove Theorem \ref{thm:mainNT}. The first step is the explicit formula for $\mathcal{F}(q)$, the family of non-principal, primitive characters to a prime modulus $q$ (remember there are $q-2$ such characters). The calculations below are similar to those in \cite{HR,Mil6}, the primary difference being that here we are interested in computing the error terms down to square-root cancelation, whereas in these papers the purpose was to compute just the main term.

Let $\phi$ be an even Schwartz function whose Fourier transform has compact support in $(-\sigma, \sigma)$. The explicit formula (see \cite{Mil3,RS}) gives the following for the 1-level density for the family:
\begin{align} \frac{1}{q-2}&\sum_{\chi \in \mathcal{F}(q)} \sum_\gamma \phi\left(\gamma \frac{\log \frac{q}{\pi}}{2\pi}\right) \nonumber \\
&= \frac{1}{(q-2)\log\frac{q}{\pi}} \int_{-\infty}^\infty \phi(\tau)\sum_{\chi \in \mathcal{F}(q)} \left[\log\frac{q}{\pi}+\frac{\Gamma'}{\Gamma}\left(\frac{1}{4}+\frac{a(\chi)}{2}+\frac{\pi i \tau}{\log \frac{q}{\pi}}\right)\right]\,d\tau  \nonumber \\
&-\frac{2}{(q-2)\log \frac{q}{\pi}}\sum_{\chi \in \mathcal{F}(q)} \sum_{k=1}^\infty \sum_{p}\frac{\chi(p)^k\log p}{p^{k/2}}\ \widehat{\phi} \left( \frac{\log p^k}{\log \frac{q}{\pi}} \right),
\end{align} where \be \twocase{a(\chi) \ := \ }{0}{if $\chi(-1) = 1$}{1}{if $\chi(-1) =
-1$.}\ee
This simplifies to
\begin{align} \frac{1}{(q-2)} &\sum_{\chi \in \mathcal{F}(q)} \sum_\gamma \phi\left(\gamma \frac{\log \frac{q}{\pi}}{2\pi}\right) \nonumber \\
     &=\ \widehat{\phi}(0) + \frac{1}{(q-2)\log \frac{q}{\pi}} \int_{-\infty}^\infty \phi(\tau)\sum_{\chi \in \mathcal{F}(q)} \left[\frac{\Gamma'}{\Gamma}\left(\frac{1}{4}+\frac{a(\chi)}{2}+\frac{\pi i \tau}{\log \frac{q}{\pi}}\right)\right]\,d\tau \nonumber \\
     &\ \ \ -\  \frac{2}{(q-2)\log \frac{q}{\pi}}\sum_p \sum_{k=1}^\infty \sum_{\chi \in \mathcal{F}(q)} \frac{\chi(p)^k\log p}{p^{k/2}}\ \widehat{\phi}\left(\frac{\log p^k}{\log \frac{q}{\pi}}\right).
\end{align}

As the integral against the $\Gamma'/\Gamma$ piece directly matches with the prediction from the Ratios Conjecture, to prove Theorem \ref{thm:mainNT} it suffices to study the triple sum piece. We do this in the following lemma.

\begin{lem} Let $\supp(\hphi) \subset (-\sigma, \sigma) \subset (-2,2)$. For any $\gep > 0$ we have
\be \frac{1}{(q-2)\log \frac{q}{\pi}}\sum_p \sum_{k=1}^\infty \sum_{\chi \in \mathcal{F}(q)} \frac{\chi(p)^k\log p}{p^{k/2}}\ \widehat{\phi}\left(\frac{\log p^k}{\log \frac{q}{\pi}}\right) \ = \ O(q^{\frac{\sigma}{2}-1+\epsilon}). \ee In particular, these terms do not contribute for $\sigma < 2$, and contribute at most at the level of square-root cancelation for $\sigma < 1$.
\end{lem}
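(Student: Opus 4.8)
The plan is to execute the innermost sum over $\chi\in\mathcal{F}(q)$ using Lemma~\ref{lem:sumsdirichletcharinFq}. Writing $\chi(p)^k=\chi(p^k)$, that lemma gives $\sum_{\chi\in\mathcal{F}(q)}\chi(p^k)=-1+(q-1)\,\mathbf{1}_{p^k\equiv 1\bmod q}$, so the triple sum splits as $-T_0+(q-1)T_1$, where
\[
T_0 \ := \ \sum_{p}\sum_{k\ge1}\frac{\log p}{p^{k/2}}\,\widehat{\phi}\!\left(\frac{\log p^k}{\log\frac{q}{\pi}}\right),\qquad
T_1 \ := \ \sum_{\substack{p,\,k\ge1\\ p^k\equiv 1\bmod q}}\frac{\log p}{p^{k/2}}\,\widehat{\phi}\!\left(\frac{\log p^k}{\log\frac{q}{\pi}}\right).
\]
Since $\widehat{\phi}$ is bounded with $\supp(\widehat{\phi})\subset(-\sigma,\sigma)$, only terms with $p^k\le(q/\pi)^\sigma=:Y$ survive, and it suffices to bound $\tfrac{1}{(q-2)\log(q/\pi)}|T_0|$ and $\tfrac{q-1}{(q-2)\log(q/\pi)}|T_1|$ by $O(q^{\sigma/2-1+\epsilon})$.

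For $T_0$ I would use only Chebyshev-type input: from $\sum_{p\le X}\log p\ll X$ and partial summation, the $k=1$ part is $\ll\sum_{p\le Y}\frac{\log p}{\sqrt p}\ll\sqrt{Y}=(q/\pi)^{\sigma/2}$, while the prime-power part ($k\ge2$) is $\ll\sum_{p\le Y^{1/2}}\frac{\log p}{p}+O(1)\ll\log q$. Hence $T_0\ll q^{\sigma/2}$, and after dividing by $(q-2)\log(q/\pi)$ this contributes $O(q^{\sigma/2-1})$; no hypothesis beyond Schwartz decay is used here.

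The term $(q-1)T_1$ is the heart of the matter, since the factor $q-1$ is of the order of $|\mathcal{F}(q)|$ and a priori could dominate; the point is that the congruence $p^k\equiv1\bmod q$ together with the support constraint $p^k\le Y$ makes $T_1$ genuinely small. If $\sigma<1$ then $Y<q$, so $p^k\le Y<q$ with $p^k\equiv1\bmod q$ forces $p^k=1$, which is impossible for prime $p$; thus $T_1=0$ and only the $O(q^{\sigma/2-1})\le O(q^{-1/2})$ contribution from $T_0$ remains, which is the square-root claim. In general, for the $k=1$ part I would bound the primes $p\equiv1\bmod q$ with $p\le Y$ by all integers $1+mq$ with $1\le m\le(Y-1)/q\ll q^{\sigma-1}$; partial summation then gives $\sum_m\frac{\log(1+mq)}{\sqrt{1+mq}}\ll\frac{\log Y}{\sqrt q}\sqrt{Y/q}\ll q^{\sigma/2-1}\log q$, so after the normalizing factors this is $O(q^{\sigma/2-1})$. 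For $k\ge2$, note that $p^k\equiv1\bmod q$ with $p^k\neq1$ forces $p^k\ge q$, while $p^k\le Y<q^2$ (this is precisely where $\sigma<2$ enters) gives $p<q^{\sigma/2}<q$; since $x^k\equiv1\bmod q$ has at most $k$ solutions and each lies in a distinct residue class containing at most one integer below $q$, and the admissible $k$ satisfy $2^k\le Y$, this part of $T_1$ is $\ll(\log q)^3/\sqrt q$, contributing $O(q^{-1/2+\epsilon})$. Altogether $\tfrac{q-1}{(q-2)\log(q/\pi)}|T_1|\ll q^{\sigma/2-1}+q^{-1/2+\epsilon}=O(q^{\sigma/2-1+\epsilon})$, since the second term only matters when $\sigma\ge1$.

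The main obstacle is thus the $(q-1)T_1$ term: one has to recognize that the arithmetic constraint $p^k\equiv1\bmod q$ forces $p^k\ge q$, and that the support constraint $p^k\le(q/\pi)^\sigma$ with $\sigma<2$ simultaneously keeps the range of $p$ below $q$ and the number of admissible powers $k$ at $O(\log q)$, which is exactly enough to absorb the factor $q-1$. Everything else is elementary prime counting and partial summation; in particular neither GRH nor the Ratios machinery of the previous section is needed. Combining the estimates for $T_0$ and $T_1$ gives $O(q^{\sigma/2-1+\epsilon})$, and the stated consequences ($\sigma<2$, and square-root cancellation for $\sigma<1$) follow immediately.
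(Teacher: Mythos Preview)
Your proof is correct and follows essentially the same route as the paper: apply Lemma~\ref{lem:sumsdirichletcharinFq} to split into $-T_0+(q-1)T_1$, bound $T_0$ by elementary prime sums, and handle $T_1$ by observing that $p^k\equiv1\bmod q$ forces $p^k\ge q$ and then counting residue classes. The only cosmetic difference is that the paper treats $k=2$ and $k\ge3$ separately in the congruence piece, whereas you handle all $k\ge2$ at once by using $\sigma<2$ to force $p<q$ (so each of the at most $k$ residue classes contains at most one prime); both arguments arrive at the same bounds.
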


\begin{proof} Let \be \twocase{\delta_{1;k}(p) \ := \ }{1}{if $p^k \equiv 1 \bmod q$}{0}{otherwise.} \ee By the orthogonality relations for Dirichlet characters (Lemma \ref{lem:sumsdirichletcharinFq}), we have\be \sum_{\chi \in \mathcal{F}(q)} \chi(p)^k \ = \ -1 + \sum_{\chi \bmod q} \chi(p^k) \ = \ -1 + (q-1)\delta_{1;k}(p). \ee
Thus \begin{align}
 S &\ :=\ \frac1{(q-2)\log q}  \sum_{p}  \sum_{k = 1}^{\infty}  \sum_{\chi \in \mathcal{F}(q)}  \frac{\chi (p)^k \log p}{p^{k/2}} \widehat{\phi} \left(\frac{\log p^k}{\log \frac{q}{\pi}}\right) \nonumber \\
  &\ =\  \frac1{(q-2)\log q} \sum_{p}  \sum_{k = 1}^{\infty} \frac{\log p}{p^{k/2}} \widehat{\phi} \left(\frac{\log p^k}{\log \frac{q}{\pi}}\right) \sum_{\chi \in \mathcal{F}(q)} \chi (p)^k  \nonumber \\
  &\ \ll\ \frac1{q\log q} \sum_{p=2}^{q^{\sigma}} \sum_{k = 1}^{\frac{\sigma \log (q/\pi)}{\log p}} \frac{\log p}{p^{k/2}} \left|\sum_{\chi (q)} \chi (p)^k\right| \nonumber \\
  &\ \ll\ \frac1{q}  \sum_{p=2}^{q^{\sigma}} \sum_{k = 1}^{\frac{\sigma \log (q/\pi)}{\log p}} \frac{1}{p^{k/2}} \left|\sum_{\chi (q)} \chi (p)^k\right| \nonumber \\
  &\ =\  \frac1{q}  \sum_{p=2}^{q^{\sigma}} \sum_{k = 1}^{\frac{\sigma \log (q/\pi)}{\log p}} \frac{1}{p^{k/2}}\left|-1 + (q-1)\delta_{1;k}(p)\right| \nonumber \\
  &\ \ll\ \frac1{q}\left( \sum_{p=2}^{q^{\sigma}} \sum_{k = 1}^{\frac{\sigma \log (q/\pi)}{\log 2}} \frac{1}{p^{k/2}} + q \sum_{k = 1}^{\frac{\sigma \log (q/\pi)}{\log 2}} \sum_{p=2 \atop p^k \equiv 1 \bmod q}^{q^\sigma} \frac{1}{p^{k/2}} \right)\nonumber\\ &\ :=\ S_1 + S_2,
\end{align} where in the above sums we increased their values by increasing the upper bounds of the $k$-sums.

We bound $S_1$ first. We have
\bea
S_1 & \ \leq\ & \frac1{q}\sum_{p=2}^{q^{\sigma}} \sum_{k = 1}^{\infty} \frac{1}{p^{k/2}}\nonumber\\ & = & \frac1{q}\sum_{p=2}^{q^{\sigma}} \frac{p^{-1/2}}{1-p^{-1/2}} \nonumber \\
         &\leq & \frac1{q}\sum_{n=2}^{q^{\sigma}} \frac{1}{n^{1/2}-1} \nonumber\\
         &\ll &\frac1{q}\int_{2}^{q^{\sigma}} \frac{1}{x^{1/2}} \, dx  \ \ll \ \frac1{q} \cdot q^{\frac{\sigma}{2}}.\eea
Thus $S_1 \ll q^{\frac{\sigma}{2}-1}$, which is negligible for $\supp(\widehat{\phi}) \subset (-2,2)$, and gives an error of size one over the square-root of the family's cardinality for support up to $(-1,1)$.

As we made numerous approximations above, it is worth noting that $S_1$ will be at least of this size due to the contribution from the $k=1$ piece. To obtain better results would require us to exploit oscillation, which we cannot do as we are taking the absolute value of the character sums.

The analysis of $S_2$ depends crucially on when $p^k \equiv 1 \bmod q$. We find

%As $(\Z/q\Z)^\ast$ is a cyclic group of order $q-1$, for $q$ an odd prime we note that $2|q-1$, and thus there will be many primes

\bea\label{eq:expansionS2sum} S_2 & \ := \ &  \sum_{k = 1}^{\frac{\sigma \log (q/\pi)}{\log 2}} \sum_{p=2 \atop p^k \equiv 1 \bmod q}^{q^\sigma} \frac{1}{p^{k/2}} \nonumber\\
         & \ll &  \sum_{p=2 \atop p \equiv 1 \bmod q}^{q^\sigma} \frac{1}{p^{1/2}}
          + \sum_{p=2 \atop p^2 \equiv 1 \bmod q}^{q^\sigma} \frac{1}{p}
          + \sum_{k = 3}^{\frac{\sigma \log (q/\pi)}{\log 2}} \sum_{p=2}^{q^{\sigma}} \frac{1}{p^{k/2}} \nonumber\\ & := & B_1 + B_2 + B_3.  \eea

For the first sum above, note that since $p$ is a prime congruent to 1 modulo $q$, we may write $p = \ell q + 1$ for $\ell \ge 1$, and $\supp(\hphi) \subset (-\sigma, \sigma)$ restricts us to $\ell \le q^{\sigma-1}$. Thus the first sum in \eqref{eq:expansionS2sum} is bounded by \bea B_1 & \ \ll \ & \sum_{\ell=1}^{q^{\sigma-1}} \frac{1}{(q\ell + 1)^{1/2}} \nonumber\\
     &\leq & \frac{1}{q^{1/2}} \sum_{\ell = 1}^{q^{\sigma - 1}} \frac{1}{\ell^{1/2}} \nonumber \\
    &\ll &  \frac{1}{q^{1/2}} \cdot q^{\frac{\sigma - 1}2} \ \ll \ q^{\frac{\sigma}{2} -1}.  \eea

The second sum, $B_2$, is handled similarly. As $p^2 \equiv 1 \bmod q$ and $\hphi$ is supported in $(-\sigma, \sigma)$, this means either $p = \ell q - 1$ or $\ell q + 1$ for $\ell \ge 1$. We find \bea B_2 & \ \ll \ & \sum_{\ell=1}^{q^{\sigma-1}} \frac1{\ell q} \ \ll \ \frac{\log q}{q}; \eea note this term is negligible for any finite support.

The proof is completed by bounding $B_3$. Note for each $k$, $p^k \equiv 1 \bmod q$ is the union of at most $k$ arithmetic progressions (with $k \ll \log q$), and the smallest $p$ can be is $q^{1/k}$ (as anything smaller has its $k$\textsuperscript{th} power less than $q$). The actual smallest $p$ can be significantly larger, as happened in the $k=2$ case where the smallest $p$ could be is $q-1$, much larger than $\sqrt{q}$.  We may replace the prime sum with a sum over $\ell \ge 0$ of $1/(\ell q + q^{1/k})^{k/2}$ (as in the previous cases, it is a finite sum due to the compact support of $\hphi$). Thus \bea
B_3 & \ \ll \ &  \sum_{k = 3}^{\frac{\sigma \log (q/\pi)}{\log 2}} \sum_{\ell=0}^{q^{\sigma-1}}  \frac{\log q}{(\ell q + q^{1/k})^{k/2}} \nonumber\\ & \ll & \sum_{k=3}^{2\sigma \log q} \left[\frac{\log q}{q^{1/2}} +
  \sum_{\ell=1}^{q^{\sigma-1}} \frac{1}{(\ell q + q^{1/k})^{k/2}}\right]  \nonumber \\ & \ll & \frac{\log^2 q}{q^{1/2}} + \frac{\log^2 q}{q^{3/2}} \sum_{\ell=1}^{q^{\sigma-1}} \frac1{\ell^{3/2}} \ \ll \ \frac{\log^2 q}{q^{1/2}},\eea which again is negligible for all support.\end{proof}

\begin{rek} We can improve the error term arising from $B_3$ beyond square-root cancelation by assuming more about $q$. For example, if $q$ and $(q-1)/2$ are primes ($(q-1)/2$ is called a Sophie Germain prime), then there are no primes $p$ with $p^k \equiv 1 \bmod q$ that contribute with our support restrictions for $k \ge 3$. We do not pursue such an analysis here for two reasons: (1) we don't expect to be able to get errors better than square-root cancelation elsewhere; (2) while standard conjectures imply the infinitude of Germain primes, there are no unconditional proofs of the existence of infinitely many such $q$, though the Circle Method predicts there should be about $2C_2 x / \log^2 x$ Sophie Germain primes at most $x$, where $C_2 \approx .66016$ is the twin prime constant; see \cite{MT-B} for the calculation.\end{rek}

\begin{rek} As mentioned in the introduction, assuming Conjectures \ref{conj:weakmont} or \ref{conj:miller} allow us to extend the support in the number theory computations beyond $(-2,2)$. This is done in \cite{Mil6}, and the results agree with the Ratios Conjecture's prediction.
\end{rek}

%%%%%%%%%%%%%%%%%%%%%%%%%%%%%%%%%%%%%%%%%%%%%%%%%%%%%%%%%%%%%%%%%%%%%%%%%%%%%%%%%%%%%%%%%%%%%
%%%%%%%%%%%%%%%%%%%%%%%%%%%%%%%%%%%%%%%%%%%%%%%%%%%%%%%%%%%%%%%%%%%%%%%%%%%%%%%%%%%%%%%%%%%%%

\ \\

\end{document}